\documentclass[11pt]{amsart}

%%%%%%%%%%%%%%%%%
%%%%%%%%%%%%%%%%%%

% Uncomment next line if AMS fonts required
\usepackage[mathscr]{eucal}
\usepackage{amsmath,amssymb,amscd,amsthm}%,latexsym
\usepackage{color} 
\usepackage{epsfig}
\newtheorem{Theorem}{Theorem}
\newtheorem{Lemma}{Lemma} 

\newtheorem{Definition}{Definition}  
\newtheorem{Corollary}{Corollary}

\theoremstyle{definition} 
\newtheorem{Remark}{Remark} 

\theoremstyle{plane}

\def \beq{ \begin{equation} }
\def \eeq{\end{equation}}

%\addtolength{\textwidth}{0cm}
%\usepackage[top=4cm, bottom=4cm, left=3.5cm, right=3.5cm]{geometry}

%\begin{document}

\title{On certain  M\"{o}bius type solutions for the $n$--body problem in a positive space form}
   
 \begin{document}

\maketitle

\markboth{Ortega-Palencia Pedro Pablo and  Reyes-Victoria J. Guadalupe}{ Solutions of type M\"{o}bius for the $n$--body problem in a positive space form}

\vspace{-0.5cm}

\author{
\begin{center}
{\rm PEDRO PABLO ORTEGA PALENCIA \\
         Departamento de Matem\'aticas  \\
         Universidad de Cartagena \\
         Cartagena de Indias, \\
         COLOMBIA\\
         {\tt portegap@unicartagena.edu.co}\\
        \medskip
         J. GUADALUPE REYES VICTORIA \\
         Departamento de Matem\'aticas \\
         UAM-Iztapalapa \\
         M\'exico, D.F. \\
         MEXICO \\
         {\tt revg@xanum.uam.mx}}
\end{center}}

 \bigskip
\begin{center}
\today
\end{center}

\begin{abstract}
We study here the  M\"{o}bius type  solutions for the $n$-body problem in  a two dimensional positive space form $\mathbb{M}_R^2$. With methods of M\"{o}bius
geometry and using the Iwasawa decomposition of the M\"{o}bius group of automorphisms ${\rm \bf  Mob}_2 \, (\mathbb{M}_R^2)$, we state algebraic functional  conditions for the existence of such  type  of solutions in  $\mathbb{M}_R^2$. We mention some examples of these type of solutions.
\end{abstract}

\medskip

\footnote*{MSC: Primary 70F15, Secondary 53Z05}

\smallskip

\footnote*{Keywords: Two dimensional  positive space form, M\"{o}bius geometry,  The $n$--body problem.}

\section{Introduction}
\label{sec:intro}

We consider in this paper the problem of studying the motion of $n$ point interacting particles of masses $m_1,\cdots,m_n$ moving on a
positive two dimensional space form $\mathbb{M}^2_R$ under the action of a suitable (cotangent) potential.

This work  is a little branch of the forgotten, unpublished, incomplete, but full of ideas work \cite{Reyes-Perez} with E. P\'erez-Chavela, to whom the second author thanks the fact of introducing him in the study of the curved celestial mechanics without leaving his profile of geometer.
We omit along all the document the term {\it curved} because, as in \cite{Reyes}, the studied space has a non-euclidian metric. Also, we omit the term 
{\it intrinsic}, because when we have chosen the geometric structure on $\mathbb{M}_R^2$ as   Riemann surface $\widehat{\mathbb{C}}$ with the canonical complex variables $(z,\bar{z})$ endowed with the conformal metric
\begin{equation}\label{met-c}
   ds^2= \frac{4R^4 \,dz d\bar{z}}{(R^2 + |z|^2)^2},
\end{equation}
 we have given the corresponding  differentiable structure in such coordinates to this space  (see \cite{Farkas} for more details).

 Following the methods of the geometric Erlangen program
as in \cite{Kisil,Reyes-Perez,Reyes} for the space $\mathbb{M}_R^2$, we define the conic motions of the $n$--body problem in terms of the
action of one dimensional subgroups of M\"{o}bius transformations group ${\rm \bf  Mob}_2 \, (\mathbb{M}_R^2)$ associated to a suitable  vector fields. By the method of matching
the vector field in the Lie algebra associated to the corresponding
subgroup, with the cotangent gravitational field, as we did in  \cite{Diacu2}
\cite{Perez}, \cite{Reyes-Perez}  and \cite{Reyes}, we state in each case functional algebraic
conditions (depending on the time $t$) which the solutions must hold in order to be one of such M\"{o}bius solutions.

\smallskip

We organize  the  paper as follows.

\smallskip

In section  we state the Iwasawa decomposition of ${\rm \bf  Mob}_2 \, (\mathbb{M}_R^2)$ that allows us to obtain
 the algebraic and geometric classification of all M\"{o}bius transformations in {\it elliptics, hyperbolics}, and {\it parabolics}. The action of such one parametric transformations  will generate the corresponding conic curves of the M\"{o}bius geometry  of $\mathbb{M}^2_R$.

\smallskip

In section \ref{sec:equamotion} we state the equations of motion of
the problem in  complex coordinates as in \cite{Diacu2}, \cite{Perez}, \cite{Reyes-Perez}, but they are obtained as in \cite{Reyes} . 

\smallskip

In section \ref{elliptic} we define the  M\"{o}bius elliptic solutions, thorough the action of suitable one-dimensional parametric subgroups of isometries
in $SU(2)$ associated to suitable Killing vector fields in the Lie algebra $su(2)$. There, by matching the Killing vector fields with the gravitational one, we find the main algebraic conditions in order of having  a  M\"{o}bius  elliptic solution. We give new examples of such type of solutions for this problem for which the geodesic circle and the southern and northern tropics play an important role.

\smallskip

In section \ref{hyperbolic} we define the so called  M\"{o}bius hyperbolic
solutions, via the action of the one-dimensional parametric subgroup
of M\"{o}bius hyperbolic transformations
 showing that they correspond to some particular type of the
homothetic orbits found in \cite{Diac, Diac-Perez}. We also give the
functional algebraic conditions (depending on the time $t$) on the positions of the particles for having
one of such solutions.

\smallskip

In section \ref{parabolic} we define the  M\"{o}bius nilpotent parabolic solutions. As in
the previous cases we give the necessary and sufficient functional algebraic
conditions which such orbits must hold. We show that, as in \cite{Diacu2, Reyes},
such type of motions do no exist.

%%%%%%%%%%%%%%%%%%%%%%%%%%%%%%%%%%%%%%%%%%%%%%%%%%%%%%%%
\section{The group ${\rm \bf  Mob}_2 \, (\widehat{\mathbb{C}})$ and the invariants of the  M\"{o}bius geometry}\label{sec:mobius-group}
%%%%%%%%%%%%%%%%%%%%%%%%%%%%%%%%%%%%%%%%%%%%%%%%%%%%%%%%

We give an algebraic classification of the
 M\"{o}bius transformations defined on the extended complex plane  $\widehat{\mathbb{C}}= \mathbb{M}_R^2 \cup \{\infty\}$,
which corresponds to  the  Riemann sphere of radius $R$ endowed with the metric (\ref{met-c}). The reader interested in the details on all the objects 
shown in this section can see them in the aforementioned reference  \cite{Reyes-Perez} or in  \cite{Dub, Iwa,  Kisil, Kob}.

\begin{Definition}
A M\"{o}bius transformation is a fractional linear transformation $ f_A
: \widehat{\mathbb{C}}  \to \widehat{\mathbb{C}}$,
\[f_A (z) = \frac{a z+b}{c z + d}, \]
where $a,b,c,d \in \mathbb{C}$ and $ad-bc =1$, and the set of these automorphisms is the  group denoted by
 ${\rm \bf  Mob}_2 \, (\widehat{\mathbb{C}})$ and named the  M\"{o}bius group.
\end{Definition}

 Any M\"{o}bius
transformation $f_A$ is associated to some matrix
\[
   A= \left(\begin{array}{cc}
    a   &  b     \\
   c &  d   \\
    \end{array}\right) \in {\rm SL}(2, \mathbb{C})= \{A \in {\rm GL}(2, \mathbb{C}) \, | \, {\rm det} \, A =1 \},
\]
which defines an isomorphism between the groups $\displaystyle {\rm \bf  Mob}_2 \, (\widehat{\mathbb{C}}) $ and $\displaystyle {\rm SL} (2,\mathbb{C}) \, / \{ \pm I\}$.

 The special unitary subgroup  is 
\[{\rm SU}(2) = \{ A \in {\rm SL}(2,\mathbb{C}) \, | \, \, \bar{A}^T \,A= I \},  \]
and  each matrix
$A \in {\rm SU}(2)$ has the form
\[
   A= \left(\begin{array}{cc}
    a        &  b     \\
   -\bar{b} & \bar{a}   \\
    \end{array}\right),
\]
with  $a,b \in \mathbb{C}$ satisfying  $|a|^2 +|b|^2 =1$ (see \cite{Dub}).

We also know from   the Lie group theory that ${\rm SU}(2)$ is the maximal compact subgroup of $SL(2, \mathbb{C})$, and that the {\it group of proper isometries}  of $\mathbb{M}^2_R$  is the quotient $ \displaystyle {\rm SU}(2) \, /  \, \{ \pm I \}$.

%%%%%%%%%%%%%%%%%%
\subsection{The elliptic  M\"{o}bius group }
%%%%%%%%%%%%%%%%%%%%

The {\it Lie algebra} of ${\rm SU}(2)$ is the 3-dimensional real linear space
\[ {\rm su}(2) = \{ X \in {\rm M}(2,\mathbb{C}) \, | \, \, \bar{X}^T =-X, \, \, {\rm trace} \,(X)=0  \}  \]
spanned by the basis of complex Pauli's spinor matrices,
\[ \left\{
X_1 =  \left( \begin{array}{ccc}
    0 & 1 \\
    -1 & 0  \\
    \end{array}\right),  \quad
X_2 = \left(\begin{array}{cc}
    i & 0 \\
    0 & -i  \\
    \end{array}\right), \quad
X_3 =\left(\begin{array}{cc}
    0 & i \\
    i & 0  \\
    \end{array}\right)
\right\}, \]
which are  Killing vector fields on the Lie
group. Applying the exponential application to the lines $tX_1, tX_2$ and $tX_3$ into ${\rm SU}(2)$
we obtain the respective isometric one-dimensional subgroups, 
\begin{enumerate}
\item[\bf 1.]  The subgroup
\[ \exp (t \, X_1)=
\left(\begin{array}{cc}
    \cos t & \sin t \\
    -\sin t  & \cos t  \\
    \end{array}\right),
\]
which defines the one-parametric family of acting M\"{o}bius
transformations in $\widehat{\mathbb{C}}$,
\begin{equation} \label{eq:first-vector-field}
 f_1 (t, z) = \frac{ z \cos t  +\sin t}{ - z \sin t  + \cos t}.
\end{equation}

This flow has asociated the vector field $1+z^2$
 in $\widehat{\mathbb{C}}$ obtained
when we derive (\ref{eq:first-vector-field}) respect to the
parameter $t$ and evaluate it at  $t=0$. This vector field corresponds to the complex differential equation
\begin{equation} \label{eq:first-vector-field-1}
\dot{z}= 1+z^2 ,
\end{equation}
whose solutions  correspond to coaxal circles with fixed points $z_1=i$ and $z_2=-i$.

 Such that flow defines a foliation of $\mathbb{M}^2_{R}$, which divides it in two connected components (hemispheres) with a common border in the geodesic separatrix ${\rm Im} \, z=0$ (meridian circle). Each component
is foliated by the circular periodic orbits, solutions of the differential equation (\ref{eq:first-vector-field-1})
and the fixed points are the centers of such foliations, also called {\it focus} of the whole set of  circles  in the sense of the  M\"{o}bius geometry of $\mathbb{M}^2_{R}$, as in \cite{Kisil}. They can be seen in the sphere $\mathbb{S}^2_{R}$, up a suitable rotation,  as two sets of periodic concentric solutions, one sited inside of the north hemisphere bounded by the geodesic separatrix (equator), and other one sited inside of the south hemisphere bounded also by such geodesic (see \cite{Reyes-Perez} for details). 

\item[\bf 2.]  The subgroup
\[ \exp (t \, X_2)=
\left(\begin{array}{cc}
    e^{it} & 0 \\
    0  & e^{-it} \\
    \end{array}\right),
\]
which defines the one-parametric family of acting M\"{o}bius
transformations
\begin{equation} \label{eq:second-vector-field}
 f_2  (t,z) = e^{2it} \, z.
\end{equation}

This flow is associated to the vector field $iz$ in $\widehat{\mathbb{C}}$ and corresponds to the complex 
differential equation,
\begin{equation} \label{eq:second-vector-field-1}
\dot{z}= 2 i \, z.
\end{equation}

The  orbits of the action of the one-parametric subgroup
$\{ \exp(tX_2)\}$ in $\mathbb{M}_R^2$ are circular orbits with center in fixed point $z=0$.

\item[\bf 3.]  The subgroup
\[ \exp (t \, X_3)=
\left(\begin{array}{cc}
    \cos t & i\sin t \\
    i\sin t  & \cos t  \\
    \end{array}\right),
\]
which defines the one-parametric family of acting M\"{o}bius
transformations
\begin{equation} \label{eq:third-vector-field}
 f_3 (t,z) = \frac{ z \cos t  + i \sin t}{z \,
i\sin t  + \cos t}.
\end{equation}

This flow is associated to the vector field $i(1-z^2)$ in $\widehat{\mathbb{C}}$ which corresponds to the complex differential equation
\begin{equation} \label{eq:third-vector-field-1}
\dot{z}= i(1-z^2),
\end{equation}
whose orbits correspond also to coaxial circles  with  focus in the fixed points $z=-1$ and $z=1$.

 Also, as in the first case, this flow defines a foliation of $\mathbb{M}^2_{R}$, which divides it in two connected components (hemispheres) with a common border in the geodesic separatrix ${\rm Re} \, z=0$ (see \cite{Reyes-Perez} for details).

\end{enumerate}

We will use the above one-dimensional subgroups for obtaining the whole set of the so
called {\it  M\"{o}bius elliptic solutions} (or, as in the dynamical literature: {\it Relative equilibria solutions}) of the $n$-body problem in $\mathbb{M}_R^2$ .

%%%%%%%%%%%%%%%%%%%%%%
\subsection{The hyperbolic normal  M\"{o}bius group}
%%%%%%%%%%%%%%%%%%%%%%%%%%%%

In the Lie algebra  $sl(2,\mathbb{C})$ we have the hyperbolic vector field,
\[
X_4 = \frac{1}{2} \left( \begin{array}{ccc}
    1 & 0 \\
     0 & -1  \\
    \end{array}\right). \]
    
    If we consider also the straight line $\{t X_4\}$ in $sl(2,\mathbb{C})$,  it is applied under the exponential map
into the  one-parametric subgroup of transformations associated to the normal  hyperbolic one dimensional 
subgroup of  M\"{o}bius matrices of ${\rm \bf  Mob}_2 \, (\widehat{\mathbb{C}})$ of the form
\[ G_h (t)=
\left(\begin{array}{cc}
    e^{t/2} & 0 \\
    0  & e^{-t/2} \\
    \end{array}\right).
\]
The above matrices generate the  acting one dimensional parametric subgroup
of   M\"{o}bius transformations given by
\begin{equation}\label{eq:Hyperbolic}
f_{G_h(t)} (z) = e^{t} \, z.
\end{equation}

The flow of this group is associated with vector field $z$ in $\widehat{\mathbb{C}}$, and with the  first order complex differential equation
\begin{equation}\label{eq:hyperbolic-vector-field}
\dot{z} = z
\end{equation}

Such that flow is a set of straight lines arising in the origin of
 coordinates with fixed point  $z=0$.

We will use this one-dimensional subgroup for obtaining the set of the {\it  M\"{o}bius normal hyperbolic solutions} (or, in the dynamical literature: {\it homothetic solutions}) of this problem.

%%%%%%%%%%%%%%%%%%%%%%%%%
\subsection{The nilpotent  parabolic  M\"{o}bius group}
%%%%%%%%%%%%%%%%%%%%%

If in the Lie algebra $sl(2,\mathbb{C})$ we consider the nilpotent parabolic vector field,
\[
X_5 = \left(\begin{array}{cc}
     0 & 1 \\
    0 & 0  \\
    \end{array}\right), \]
the straight line $\{ t X_5 \}$ in $sl(2,\mathbb{C})$ is applied under the exponential map
into the  one-parametric subgroup of transformations associated to the nilpotent parabolic 
subgroup of M\"{o}bius matrices of ${\rm \bf  Mob}_2 \, (\widehat{\mathbb{C}})$ of the form
\[G_p (t)=
\left(\begin{array}{cc}
    1 & t \\
    0  & 1  \\
    \end{array}\right),
\]
which defines the subgroup of acting  M\"{o}bius
transformations
\begin{equation} \label{eq:Parabolic}
f_{G_p (t)} (z) = z + t,
\end{equation}
in $\mathbb{M}^2_R$.

The one parametric subgroup (\ref{eq:Parabolic}) is associated to the unitary vector field $1$ in $\widehat{\mathbb{C}}$,
which defines the first order complex differential equation in $\mathbb{M}_R^2$,
\begin{equation}\label{eq:Moebius-parabolic-field}
\dot{z} = 1.
\end{equation}

The flow is a set of horizontal parallel straight lines and there are not fixed points in  $\mathbb{M}^2_R$.

As before, we will use this one-dimensional subgroup for obtaining the set of {\it  M\"{o}bius nilpotent parabolic solutions}
 (or, in the dynamical literature: {\it parabolic solutions}).

%%%%%%%%%%%%%%%%%%%%%%%%%%%%%%%%%%%%
\subsection{The Iwasawa decomposition of  ${\rm \bf SL}\, (2, \mathbb{C})$}
%%%%%%%%%%%%%%%%%%%%%%%%%%%%%%%%%%%%%%

In the M\"{o}bius geometry of $\mathbb{M}^2_R$  (see \cite{Kisil, Kob} for details), we have the following distinguished subgroups:
\begin{itemize}
\item $K=SU(2)$ called  the {\it cyclic elliptic subgroup} of ${\rm \bf SL}\, (2, \mathbb{C})$,
\item $N = \left\{ \left(\begin{array}{cc}
    1   &  b   \\
   0   &    1   \\
    \end{array}\right) | \, b \in \mathbb{R} \right\}$  named the {\it nilpotent  parabolic subgroup} of ${\rm \bf SL}\, (2, \mathbb{C})$,
\item $B = \left\{ \left(\begin{array}{cc}
   e^{\frac{\alpha}{2}}   &  0   \\
   0   &    e^{-\frac{\alpha}{2}}   \\
    \end{array}\right) \,
     | \, \alpha \in \mathbb{R}\,  \right\}$ called the {\it normal hyperbolic subgroup} of ${\rm \bf SL}\, (2, \mathbb{C})$.
\end{itemize}

The Iwasawa decomposition Theorem states that the Lie group ${\rm \bf SL}\, (2, \mathbb{C})$ can be factorized by the before subgroups,
\begin{equation}\label{eq:Iwasawa}
{\rm \bf SL}\, (2, \mathbb{C})= B \, N \, K.
\end{equation}

This is, for any $A \in {\rm \bf SL}\, (2, \mathbb{C})$, there exist unique matrices $ \mathit{P} \in B, \mathit{R} \in N, \mathit{S} \in K$ such that
 $A$ can be factorized in the form $A = \mathit{P} \mathit{R} \mathit{S}$ (see \cite{Husemuller}, \cite{Iwa} and \cite{Kisil} for more details).

An immediate result of this  Theorem is also  the following one.

\begin{Corollary}\label{coro1:Iwasawa}
For any  M\"{o}bius transformation $f_A \in {\rm \bf Mob}_2\, (\widehat{\mathbb{C}})$ there exist unique matrices $ \mathit{P} \in B, \mathit{R} \in N, \mathit{S} \in K$ as in Iwasawa's Theorem, such that $f_A$ can be factorized in  the form $f_A = f_{\mathit{P}} \circ f_{ \mathit{R}} \circ f_{\mathit{S}}$.
\end{Corollary}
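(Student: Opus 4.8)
The plan is to deduce the corollary directly from the group-theoretic Iwasawa factorization (\ref{eq:Iwasawa}) together with the fact that the assignment $A \mapsto f_A$ is a group homomorphism from ${\rm \bf SL}(2,\mathbb{C})$ onto ${\rm \bf Mob}_2(\widehat{\mathbb{C}})$. First I would record the basic multiplicative identity $f_A \circ f_{A'} = f_{AA'}$, which follows by substituting the formula for $f_{A'}(z)$ into that for $f_A$ and simplifying the resulting fraction: matching numerator and denominator with the entries of the matrix product $AA'$ confirms that composition of M\"obius maps corresponds to matrix multiplication. Combined with the isomorphism ${\rm \bf Mob}_2(\widehat{\mathbb{C}}) \cong {\rm \bf SL}(2,\mathbb{C})/\{\pm I\}$ already established above, this shows that the map $\Phi \colon A \mapsto f_A$ is a surjective homomorphism with $\ker \Phi = \{\pm I\}$.

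For \emph{existence}, given $f_A$ I would choose any matrix representative $A \in {\rm \bf SL}(2,\mathbb{C})$ and apply the Iwasawa decomposition (\ref{eq:Iwasawa}) to write $A = PRS$ with $P \in B$, $R \in N$ and $S \in K$. Applying the homomorphism $\Phi$ then yields $f_A = f_{PRS} = f_P \circ f_R \circ f_S$, which is the desired factorization.

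The delicate point is \emph{uniqueness}, and here the main obstacle is the sign ambiguity coming from the two-to-one cover $\Phi$, since a single M\"obius map $f_A$ is represented by both $A$ and $-A$. Suppose $f_P \circ f_R \circ f_S = f_{P'} \circ f_{R'} \circ f_{S'}$ with the primed and unprimed matrices in $B$, $N$, $K$ respectively. Because $\ker \Phi = \{\pm I\}$, the matrix products satisfy $PRS = \pm P'R'S'$. In the $+$ case the uniqueness part of the Iwasawa theorem forces $P = P'$, $R = R'$, $S = S'$. The key observation that settles the $-$ case is that $-I \in K$ (it has the required unitary form with $|a|^2 + |b|^2 = 1$), whereas $-I \notin B$ and $-I \notin N$; since $-I$ is central, $-P'R'S' = P'R'(-S')$ with $-S' \in K$, and Iwasawa uniqueness now gives $P = P'$, $R = R'$, $S = -S'$. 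As $f_{-S'} = f_{S'}$, the three M\"obius factors coincide in either case. Thus the hyperbolic factor $P$ and the parabolic factor $R$ are uniquely determined as matrices, while the elliptic factor $S$ is unique up to the sign $\pm I$; equivalently, the maps $f_P, f_R, f_S$ are themselves uniquely determined, which is exactly the content of the statement.
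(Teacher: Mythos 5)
Your proposal is correct and follows precisely the route the paper intends: the paper in fact gives no proof at all, presenting the corollary as an ``immediate result'' of the Iwasawa factorization (\ref{eq:Iwasawa}) combined with the isomorphism ${\rm \bf Mob}_2\,(\widehat{\mathbb{C}}) \cong {\rm SL}(2,\mathbb{C})/\{\pm I\}$, which is exactly the homomorphism argument you spell out. Your uniqueness analysis is actually more careful than the statement itself: since $-I \in K$ while $-I \notin B$ and $-I \notin N$, the matrix $S$ is determined only up to sign, so the ``unique matrices'' wording of the corollary is strictly too strong --- what is unique is the triple of M\"{o}bius factors $f_{\mathit{P}}, f_{\mathit{R}}, f_{\mathit{S}}$ (equivalently, $\mathit{P}$ and $\mathit{R}$ exactly, and $\mathit{S}$ up to $\pm I$), precisely as your proof concludes.
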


Another important result for the work is the following.

\begin{Corollary}\label{coro2:Iwasawa}
Any one-dimensional parametric subgroup  $g(t) \in {\rm \bf Mob}_2\, (\widehat{\mathbb{C}})$ can be written in a unique way of the form
\begin{equation}\label{eq:Iwasawa-decomp}
g(t) = f_{\mathit{P(t)}} \circ f_{ \mathit{R(t)}} \circ f_{\mathit{S(t)}}
\end{equation}
 with,
\[   \mathit{P(t)}=  \left(\begin{array}{cc}
    a(t)  &  b(t)   \\
  - \bar{b}(t)   &    a(t)  \\
    \end{array}\right),    \quad   \mathit{R(t)}=  \left(\begin{array}{cc}
   e^{\frac{t}{2}}   &  0   \\
   0   &    e^{-\frac{t}{2}}   \\
    \end{array}\right),    \quad  \mathit{S(t)}=  \left(\begin{array}{cc}
    1   &  t   \\
   0   &    1   \\
    \end{array}\right). \]
\end{Corollary}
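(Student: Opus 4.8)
The plan is to obtain the factorization by applying the Iwasawa decomposition not to a single matrix but to the whole curve $t \mapsto g(t)$, and then to read off the displayed shapes from the definitions of the three subgroups. First I would lift the one--parameter subgroup from ${\rm \bf Mob}_2(\widehat{\mathbb{C}}) \cong SL(2,\mathbb{C})/\{\pm I\}$ to $SL(2,\mathbb{C})$: since a one--parameter subgroup is a continuous homomorphism $\mathbb{R} \to {\rm \bf Mob}_2(\widehat{\mathbb{C}})$ and $SL(2,\mathbb{C})$ is the simply connected cover of the M\"obius group, $g$ lifts uniquely to $\widehat g(t) = \exp(tX) \in SL(2,\mathbb{C})$ with $\widehat g(0) = I$. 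This removes the per--$t$ sign ambiguity and reduces everything to factorizing the matrix $\widehat g(t)$ for each real $t$.

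Next, for each fixed $t$, I would apply the Iwasawa decomposition (\ref{eq:Iwasawa}) to $\widehat g(t)$ in the ordering $K\,B\,N$ used in (\ref{eq:Iwasawa-decomp}), with $K = SU(2)$, the diagonal subgroup $B$, and the unipotent subgroup $N$, obtaining unique factors $P(t) \in K$, $R(t) \in B$ and $S(t) \in N$ with $\widehat g(t) = P(t)\,R(t)\,S(t)$. Since $f_A \circ f_{A'} = f_{AA'}$, this yields $g(t) = f_{P(t)} \circ f_{R(t)} \circ f_{S(t)}$, exactly as in Corollary \ref{coro1:Iwasawa}. The three displayed shapes are then forced by the very definitions of the subgroups recalled above: a matrix of $K$ has the unitary form with equal diagonal entries, a matrix of $B$ is diagonal of type $\mathrm{diag}(e^{\cdot/2}, e^{-\cdot/2})$, and a matrix of $N$ is unipotent upper triangular; reading off the hyperbolic and nilpotent parameters produces $R(t)$ and $S(t)$ in the stated form. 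Uniqueness of the triple is inherited verbatim from the uniqueness clause of the Iwasawa theorem, and evaluating at $t=0$ forces $P(0)=R(0)=S(0)=I$, in agreement with the displayed matrices.

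To turn the pointwise factorization into honest curves $P(t), R(t), S(t)$, I would realize the decomposition by the explicit Gram--Schmidt (QR--type) procedure: orthonormalizing the columns of $\widehat g(t)$ with respect to the Hermitian product yields the unitary factor $P(t)$, while the residual upper--triangular factor splits into its positive diagonal part (the element of $B$) and its unipotent part (the element of $N$). Because each of these operations is a rational--and--square--root expression in the entries of $\widehat g(t)$, which are themselves real--analytic in $t$, the factors $P(t), R(t), S(t)$ depend real--analytically on $t$, and the parameters occurring in $R(t)$ and $S(t)$ are recovered as analytic functions of $t$ vanishing at $t = 0$.

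The step I expect to be the main obstacle is precisely this identification of the functional form of the $B$-- and $N$--factors, since the per--$t$ existence and uniqueness are immediate from the Iwasawa theorem and the lifting is formal. Concretely, the remaining work is to carry out the Gram--Schmidt computation symbolically for $\widehat g(t) = \exp(tX)$, to extract the diagonal (hyperbolic) and off--diagonal (parabolic) entries of the triangular factor, and to verify that they give exactly $R(t) = \mathrm{diag}(e^{t/2},e^{-t/2})$ and $S(t) = \left(\begin{smallmatrix} 1 & t \\ 0 & 1 \end{smallmatrix}\right)$ as displayed; once that is settled, the remaining claims --- uniqueness, the value at $t = 0$, and the passage between $SL(2,\mathbb{C})$ and ${\rm \bf Mob}_2(\widehat{\mathbb{C}})$ --- follow formally from what precedes.
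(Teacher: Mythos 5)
The paper offers no proof of this Corollary at all --- it is stated as an immediate consequence of the Iwasawa theorem and Corollary \ref{coro1:Iwasawa} --- so your scaffold is supplying what the paper leaves implicit, and most of it is sound: the unique lifting of $g(t)$ to $\widehat g(t)=\exp(tX)$ in the simply connected group ${\rm SL}(2,\mathbb{C})$, the pointwise Iwasawa factorization, the descent via $f_{A}\circ f_{A'}=f_{AA'}$, and real-analytic dependence of the factors on $t$ through the Gram--Schmidt construction. You were also right to quietly switch to the $K\,B\,N$ ordering to match the displayed shapes, even though the theorem as quoted in the paper reads ${\rm \bf SL}(2,\mathbb{C})=B\,N\,K$; the decomposition exists, with uniqueness, in either order, and the paper is internally inconsistent on this point.

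The genuine gap is the step you yourself flag as the ``main obstacle'': the symbolic verification that the $B$- and $N$-factors come out as exactly $R(t)=\mathrm{diag}(e^{t/2},e^{-t/2})$ and $S(t)=\left(\begin{smallmatrix}1 & t\\ 0 & 1\end{smallmatrix}\right)$ cannot succeed, because for a general one-parameter subgroup that claim is false. Take $g(t)=f_{\exp(tX_2)}$, i.e.\ $g(t)(z)=e^{2it}z$: then $\exp(tX_2)\in SU(2)$ for every $t$, so by uniqueness $P(t)=\exp(tX_2)$ and $R(t)=S(t)=I$, and the hyperbolic and nilpotent parameters are identically $0$, not $t$; similarly $g(t)(z)=e^{t}z$ forces $S(t)=I$. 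What the pointwise decomposition actually yields is $R(t)=\mathrm{diag}\bigl(e^{\alpha(t)/2},e^{-\alpha(t)/2}\bigr)$ and $S(t)$ unipotent with entry $b(t)$, where $\alpha(t)$ and $b(t)$ are real functions (analytic, by your Gram--Schmidt argument) vanishing at $t=0$, and these factor curves are in general not themselves one-parameter subgroups. The Corollary's display must therefore be read with functional parameters throughout, as the $a(t),b(t)$ in $P(t)$ already suggest (note also that the lower-right entry of $P(t)$ should be $\bar a(t)$ for the matrix to lie in $SU(2)$); a proof that commits, as yours does, to verifying the literal parameter $t$ in $R(t)$ and $S(t)$ fails at precisely that point. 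One further small item: the matrix triple is unique in ${\rm SL}(2,\mathbb{C})$, but as M\"obius data only up to the simultaneous sign absorbed by $f_{P}=f_{-P}$; your lifting handles this, but it deserves an explicit sentence.
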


%%%%%%%%%%%%%%%%%%%%%%%%%%%%%%%%%%%%%%%%%%%%%%%%%%%%%
\section{Equations of motion and the  M\"{o}bius solutions }\label{sec:equamotion}
%%%%%%%%%%%%%%%%%%%%%%%%%%%%%%%%%%%%%%%%%%%%%%%%%%%%%%%%%

In \cite{Perez} the authors obtain the equations of motion for this problem through the stereographic projection of the sphere (of radius $R$) embedded in $\mathbb{R}^3$ into the complex plane $\mathbb{C}$ endowed with the  metric (\ref{met-c}). After, the authors in \cite{Reyes} using the Vlasov-Poisson equation, obtain  the classical equation  for the motion of particles with positives masses $m_1,m_2,\cdots,m_n$  in a Riemannian or semi-Remannian manifold with coordinates $(x^1, x^2, \cdots, x^N)$ endowed with a metric $(g_{ij})$ and associated connection $\Gamma^i_{jk}$, which are  moving under the influence  of a pairwise acting potential $U$. Such equations are given by
\begin{equation}\label{eq:Vlasov-Poisson}
 \frac{D \dot{x}^i}{d t} =\ddot{x}^i+ \sum_{l,j} \Gamma^i_{lj} \dot{x}^l \dot{x}^j = \sum_{k} m_k g^{ik} \frac{\partial U}{\partial x^k},
\end{equation} 
for $i=1,2, \cdots, N$, where $\displaystyle \frac{D}{d t}$ denotes the covariant derivative and $g^{-1}=(g^{ik})$ is the inverse matrix for the metric $g$.
\begin{Remark}
We observe that in equation (\ref{eq:Vlasov-Poisson}), the left hand
side corresponds to the equations of the geodesic curves, whereas the right hand side corresponds to the gradient of the potential in the given metric. This means also that if the potential is constant, then the particles move along
geodesics.
\end{Remark}

It is well known from the  Beltrami-Bers' Theorem (see \cite{DoCarmo, Dub, Farkas}) that all connected smooth two dimensional manifold admits an atlas such that its associated  Riemannian  metric is conformal and endows it with a Riemann surface structure. The Poincar\'e-Koebe Uniformization Theorem implies that all orientable connected smooth two dimensional manifold admits a Riemannian metric with negative constant Gaussian curvature (hyperbolic structure), except the Sphere $\mathbb{S}^2$ and the torus $\mathbb{T}^2$ (see \cite{Gusevski, Farkas}). The obstruction for the existence of hyperbolic structure of  the last mentioned surfaces is, from the Gauss-Bonnet Theorem, their Euler's characteristics. In other words, almost all the surfaces are hyperbolic \cite{Gusevski}.

\begin{Definition} 
A two dimensional positive space form is a smooth connected surface with positive constant Gaussian curvature.
\end{Definition}

The Minding's Theorem states that  all the two dimensional manifolds with the same constant Gaussian curvature are locally isometric  (see \cite{DoCarmo}).
Therefore, any  two dimensional space form is locally characterized up an isometry by the sign of the curvature and  it is locally isometric to one of the standard space forms: The plane with zero Gaussian curvature, the sphere of radius $R$ with positive Gaussian curvature $\displaystyle K=\frac{1}{R^2}$ and the hyperboloid of radius $R$  with negative Gaussian curvature $\displaystyle K=-\frac{1}{R^2}$. In particular, any positive space form belongs to the isometric  class  of $\mathbb{M}^2_R$  (and therefore to  the same  differentiable class  \cite{Dub}, chapter 3).

We state in this section the equations of motion for the  $n$--body problem
in $\mathbb{M}^2_R$ chosen as the representation of the isometric class of the whole set of two dimensional  positive  space forms.

\smallskip

Let us denote by $\mathbf{z}=(z_1,z_2,\cdots,z_n) \in  \mathbb{C}^n$  the total vector position of $n$ particles with masses
$m_i>0$ located in the point $z_i$ on the space $\mathbb{M}^2_{R} \equiv \mathbb{C}$.

\smallskip

For the pair of points $z_k$ and $z_j$ in $\mathbb{M}^2_R$ we denote
the geodesic distance between them by $d(z_k,z_j)=d_{kj}$ and define
(see \cite{Perez}) the cotangent relation
\begin{equation}\label{eq:cot}
\cot_R \left(\frac{d_{kj}}{R}\right)=\frac{2(z_k \bar{z}_j+z_j
\bar{z}_k)R^2+
(|z_k|^2-R^2)(|z_j|^2-R^2)}{ 4 R^2 \, |z_j-z_k|^2 \, |R^2+ \bar{z}_j z_k|^2}. \\
\end{equation}

The singular set in  $\mathbb{M}^2_{R}$ for the  $n$--body problem
is the zero set of the equation
 \[ |z_j-z_k|^2 \, |R^2+ \bar{z}_j z_k|^2=0. \]

From here, we obtain the following singular  sets:
\begin{enumerate}
\item   The {\it singular collision set}  given by  $\Delta(C) = \cup_{kj} \,\,  \Delta(C)_{kj}$, where the set
\begin{equation}
\label{eq:collisionset} \Delta(C)_{kj}=\{\mathbf{z} = (z_1, z_2,\cdots, z_n) \in \mathbb{C}^n \, | \, z_k = z_j, \, k \neq j\}
\end{equation}
is the one obtained by the pairwise collision of the particles with masses $m_j$ and $m_k$.

\item   The {\it singular geodesic conjugated set}    given by $\Delta (A)= \cup_{kj} \, \, \Delta(A)_{kj}$, where the set
\begin{equation}
\label{eq:antipodalset} \Delta(A)_{kj}=\left\{  \mathbf{z} = (z_1,
z_2,\cdots, z_n) \in \mathbb{C}^n \, | \, z_k= \frac{-R^2}{|z_j|^2}
\, z_j, \, k \neq j \right\},
\end{equation}
is the one obtained by the pairwise antipodal points with  masses $m_j$ and $m_k$. We remark that the singularity holds in this case because such antipodal points are in reality geodesic conjugated points for an infinity of geodesic curves which indetermine the acting force of the potential.
\end{enumerate}

It is clear that the presence of geodesic conjugated points in arbitrary equations of motion (\ref{eq:Vlasov-Poisson}) on manifolds allows us to singularities of such mechanical system, by the same reason as in this particular case.

\smallskip

We define the {\it total singular set} of  the problem as
\begin{equation}
\label{eq:totalsingularset} \Delta=\Delta(C) \cup  \,  \Delta(A).
\end{equation}

Let  $\mathbf{z} =(z_1,z_2,\cdots,z_n)$ be the total vector  position of $n$ point particles with masses
$m_1,m_2,\cdots,m_n>0$ in the space $\mathbb{M}^2_{R}$, moving under the action of the potential 
\begin{eqnarray}\label{eq:potesf}
U_R &=& U_R (\mathbf{z}, \bar{\mathbf{z}}) = \frac{1}{R}\sum_{1\leq
k < j \leq n}^n m_k m_j \cot_R
\left(\frac{d_{kj}}{R}\right) \nonumber \\
&=& \frac{1}{R} \sum_{1\leq k < j \leq n}^n m_k m_j \frac{2(z_k
\bar{z}_j+z_j \bar{z}_k)R^2+ (|z_k|^2-R^2)(|z_j|^2-R^2)}{2 R \,
|z_j-z_k| \, |R^2+ \bar{z}_j z_k|}, 
\end{eqnarray}
defined in the set $(\mathbb{M}^{2}_{R})^n \setminus \Delta$.

\smallskip

A direct substitution of the equations for the geodesics curves in $ \mathbb{M}^2_{R}$ and the gradient
\begin{eqnarray}\label{eq:gradmet}
 \frac{\partial U_R}{\partial \bar{z}_k} &=&
 \sum_{j=1, j \neq k}^n \frac{m_k m_j \, ( R^2+
 |z_k|^2)(|z_j|^2+R^2)^2(R^2+\bar{z}_jz_k)(z_j-z_k)}{4 R^2 \, |z_j-z_k|^3 \, |R^2+ \bar{z}_j z_k|^3} \nonumber \\
 \end{eqnarray}
for $k=1,2, \cdots, n$, in equation (\ref{eq:Vlasov-Poisson}),  shows that the  solutions of the problem must  satisfy
the following system of second order ordinary differential equations
\begin{equation}\label{eq:motiongral}
 m_k \ddot{z}_k -\frac{2 m_k \bar{z}_k\dot{z}_k^2}{R^2+ |z_k|^2} = \frac{2}{\lambda (z_k, \bar{z}_k)} \,
 \frac{\partial U_R }{\partial \bar{z}_k}(\mathbf{z}, \bar{\mathbf{z}}),
\end{equation}
where,
\begin{equation}\label{eq:conforesf}
\lambda (z_k, \bar{z}_k)= \frac{4R^4}{(R^2+|z_k|^2)^2}
\end{equation}
is the value of conformal function for the Riemannian metric (\ref{met-c})  in the point $(z, \bar{z})$ (see \cite{Perez}).

\begin{Definition}\label{def:Moebius-solution}
 A {\it M\"{o}bius solution}  for the $n$--body problem
 in $\mathbb{M}_R^2$  is a solution $\mathbf{z} (t) =(z_1(t), z_2(t),
\cdots, z_n(t))$ of the equations of motion (\ref{eq:motiongral}) which is invariant
 under some one-dimensional subgroup $ \{ G(t) \} \subset {\rm \bf Mob}_2\, (\widehat{\mathbb{C}})$.
\end{Definition}

Due to the Iwasawa decomposition (\ref{eq:Iwasawa-decomp}) in Corollary \ref{coro2:Iwasawa}, in the following sections we will study the M\"{o}bius solutions of (\ref{eq:motiongral}) corresponding  to the subgroups $K, N$ and $B$ of  ${\rm \bf  Mob}_2 \, (\mathbb{M}_R^2)$
which  allow us to obtain a suitable classification of motions for the  $n$--body problem in a positive space form.

%%%%%%%%%%%%%%%%%%%%%%%%%%%%%%%%%%%%%%%%%%%%%%%%%%%%%%%%%
\section{M\"{o}bius elliptic solutions}\label{elliptic}
%%%%%%%%%%%%%%%%%%%%%%%%%%%%%%%%%%%%%%%%%%%%%%%%%%%%%%%%%%

We start our analysis of the M\"{o}bius solutions with the so called {\it elliptic solutions}, obtained by the action of one dimensional parametric
subgroups of the third factor  $SU(2)$  in the Iwasawa decomposition (\ref{eq:Iwasawa-decomp}), some of which in fact have been
 studied in the papers \cite{Diac, Diac-Perez, Perez, Reyes-Perez}.

\begin{Definition}\label{def:relative-equilibria}
A  {\it M\"{o}bius  elliptic solution}  for $n$--body problem
(\ref{eq:motiongral}) is a solution $\mathbf{z} (t)=(z_1(t), z_2(t),
\cdots, z_n(t))$ of the equations of motion (\ref{eq:motiongral}) which is invariant
 to  any one-dimensional subgroup $ \{ A(t) \} \subset
SU(2)/ \{ \pm I\} $. 
\end{Definition}

Since the basic one-dimensional parametric subgroups
(\ref{eq:first-vector-field}), (\ref{eq:second-vector-field}) and
(\ref{eq:third-vector-field}) generate under the
composition of functions all  one-dimensional parametric subgroups $\{ A(t) \} \subset SU(2)/ \{ \pm I\}$, we have  the following result
 obtained in \cite{Reyes-Perez}.

\begin{Theorem} \label{theo:relative-equilibria}
Let be $n$ point particles with masses $m_1,m_2, \cdots, m_n>0$
moving in $\mathbb{M}_R^2$. An equivalent condition for $z(t)=(z_1(t), z_2(t), \cdots, z_n(t))$  to be a  M\"{o}bius  elliptic solution
of (\ref{eq:motiongral}) is that the
coordinates satisfy one of the following rational functional equations depending of the time.

\begin{enumerate}
\item[\bf a.] The one obtained by the action of
(\ref{eq:first-vector-field}), that is,
\begin{equation} \label{eq:rationalsystem-1}
\frac{16 R^6 \, (1+z_k^2)(R^2 z_k-\bar{z}_k )}{(R^2+ |z_k|^2)^4} =
 \sum_{j=1, j \neq k}^n \frac{ m_j \,(|z_j|^2+R^2)^2(R^2+\bar{z}_jz_k)(z_j-z_k)}{|z_j-z_k|^3 \, |R^2+ \bar{z}_j z_k|^3} 
\end{equation}
with velocity $\displaystyle \dot{z}_k= 1+z_k^2$ at each point.

\item[\bf b.]  That obtained by the action of
(\ref{eq:second-vector-field}), that is,
\begin{equation} \label{eq:rationalsystem-2}
\frac{32 \, R^6(|z_k|^2-R^2) z_k }{ (R^2+ |z_k|^2)^4  } =
 \sum_{j=1, j \neq k}^n \frac{m_j \, (|z_j|^2+R^2)^2(R^2+\bar{z}_jz_k)(z_j-z_k)}{|z_j-z_k|^3 \, |R^2+ \bar{z}_j z_k|^3}
\end{equation}
with velocity $\displaystyle \dot{z}_k= 2 i \, z_k $ at each point.

\item[\bf c.]  The one obtained by the action of
(\ref{eq:third-vector-field}), that is,
\begin{equation} \label{eq:rationalsystem-3}
\frac{16 R^6 \,  (1-z_k^2)(\bar{z}_k+R^2 z_k )}{(R^2+ |z_k|^2)^4} =
 \sum_{j=1, j \neq k}^n \frac{ m_j \,(|z_j|^2+R^2)^2(R^2+\bar{z}_jz_k)(z_j-z_k)}{|z_j-z_k|^3 \, |R^2+ \bar{z}_j z_k|^3} 
\end{equation}
with velocity $\displaystyle \dot{z}_k=i(1-z_k^2)$ at each point.
\end{enumerate}
\end{Theorem}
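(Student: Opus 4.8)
The plan is to unwind the invariance hypothesis of Definition~\ref{def:relative-equilibria} into a kinematic constraint on each body and then substitute that constraint directly into the equations of motion~(\ref{eq:motiongral}). Invariance of $\mathbf{z}(t)$ under a one-parameter subgroup $\{A(t)\}\subset SU(2)/\{\pm I\}$ means precisely that each particle is carried along an orbit of the action, so that $z_k(t)=f_{A(t)}(z_k(0))$ for every $k$; differentiating at the running time, and using that the generating subgroups~(\ref{eq:first-vector-field}), (\ref{eq:second-vector-field}), (\ref{eq:third-vector-field}) produce respectively the vector fields of~(\ref{eq:first-vector-field-1}), (\ref{eq:second-vector-field-1}) and (\ref{eq:third-vector-field-1}), the velocity of each body is forced to be $\dot{z}_k=1+z_k^2$, $\dot{z}_k=2iz_k$, or $\dot{z}_k=i(1-z_k^2)$. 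Since these three subgroups generate every one-parameter subgroup of $SU(2)/\{\pm I\}$, it suffices to establish the equivalence case by case. In each case the argument is an ``if and only if'': substituting the velocity constraint into~(\ref{eq:motiongral}) yields the corresponding rational identity, and conversely any configuration satisfying that identity together with the prescribed velocity field solves~(\ref{eq:motiongral}) and is manifestly orbit-invariant.

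Next I would compute the left-hand (inertial) side of~(\ref{eq:motiongral}) for each velocity field. Differentiating the velocity constraint once more gives $\ddot{z}_k=2z_k(1+z_k^2)$ in case~(a), $\ddot{z}_k=-4z_k$ in case~(b), and $\ddot{z}_k=2z_k(1-z_k^2)$ in case~(c). Inserting $\dot{z}_k$ and $\ddot{z}_k$ into $m_k\ddot{z}_k-2m_k\bar{z}_k\dot{z}_k^2/(R^2+|z_k|^2)$ and factoring, the decisive simplification is that the numerator $z_k(R^2+|z_k|^2)\mp\bar{z}_k(1\pm z_k^2)$ collapses, upon writing $|z_k|^2=z_k\bar{z}_k$ and cancelling the $\pm z_k^2\bar{z}_k$ cross-terms, to $R^2z_k\mp\bar{z}_k$. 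Thus, for instance in case~(a), the inertial side reduces to
\[
m_k\ddot{z}_k-\frac{2m_k\bar{z}_k\dot{z}_k^2}{R^2+|z_k|^2}=\frac{2m_k(1+z_k^2)(R^2z_k-\bar{z}_k)}{R^2+|z_k|^2},
\]
and analogously the factor $R^2z_k+\bar{z}_k$ appears in case~(c) while the factor $|z_k|^2-R^2$ appears in case~(b).

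For the right-hand side I would substitute the gradient~(\ref{eq:gradmet}) and the conformal factor~(\ref{eq:conforesf}) into $\tfrac{2}{\lambda}\,\partial U_R/\partial\bar{z}_k$. Here $2/\lambda=(R^2+|z_k|^2)^2/(2R^4)$, and multiplying by the $(R^2+|z_k|^2)$ already present in~(\ref{eq:gradmet}) collects a total factor $m_k(R^2+|z_k|^2)^3/(8R^6)$ in front of the sum $\sum_{j\ne k}\frac{m_j(|z_j|^2+R^2)^2(R^2+\bar{z}_jz_k)(z_j-z_k)}{|z_j-z_k|^3|R^2+\bar{z}_jz_k|^3}$. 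Equating the two sides, cancelling the common mass $m_k$, and clearing the factor $(R^2+|z_k|^2)^3/(8R^6)$ against the lone $(R^2+|z_k|^2)$ in the denominator of the inertial side, produces exactly the coefficient $16R^6/(R^2+|z_k|^2)^4$ (respectively $32R^6/(R^2+|z_k|^2)^4$ in case~(b)) multiplying the purely geometric numerator, i.e.\ equations~(\ref{eq:rationalsystem-1})--(\ref{eq:rationalsystem-3}). Reversing every step gives the converse, which is where sufficiency comes from.

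I do not expect a genuine conceptual obstacle: the whole statement is the reduction of the dynamics to the orbit constraint followed by algebra, and the only identity on which the reduction truly hinges is the clean cancellation of the $z_k^2\bar{z}_k$ cross-terms in each inertial simplification. The one place demanding care is bookkeeping, namely tracking the several powers of $(R^2+|z_k|^2)$ and of $R$ that are distributed among the conformal factor, the gradient, and the inertial term. In particular one must respect the factor $2i$ in the velocity $\dot{z}_k=2iz_k$ of case~(b), inherited from the $e^{2it}$ in~(\ref{eq:second-vector-field}), since it is this factor that produces the leading coefficient $32R^6$ of~(\ref{eq:rationalsystem-2}) rather than the $16R^6$ of the other two cases; using $iz_k$ here would corrupt that constant.
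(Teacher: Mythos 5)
Your proposal is correct and follows essentially the same route as the paper's own (very terse) proof: differentiate the prescribed velocity field to get $\ddot{z}_k$ (e.g.\ $\ddot{z}_k = 2z_k(1+z_k^2)$ in case \textbf{a}), substitute $\dot{z}_k$ and $\ddot{z}_k$ into the equations of motion (\ref{eq:motiongral}), and simplify against the gradient (\ref{eq:gradmet}) and conformal factor (\ref{eq:conforesf}) to obtain (\ref{eq:rationalsystem-1})--(\ref{eq:rationalsystem-3}). The paper dispatches all of this as ``straightforward computations'' for case \textbf{a} and declares the rest similar, whereas you have verified the cancellations (including the $R^2 z_k \mp \bar{z}_k$ collapse and the factor-of-four effect of the velocity $2iz_k$ on the constant $32R^6$) explicitly and correctly.
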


\begin{proof} For the first case, by straightforward computations, we have from equation
(\ref{eq:first-vector-field-1}) the equality
\begin{equation}
\ddot{z}_k= 2 \, z_k  \left(1+z_k^2 \right),
\end{equation}
which when is substituted, together with
(\ref{eq:first-vector-field-1}) into equation (\ref{eq:motiongral}),
gives us the relation (\ref{eq:rationalsystem-1}).
The proofs for the other cases are similar.
\end{proof}

The following result give us conditions on the initial positions of the particles for generating an elliptic solution of equation (\ref{eq:rationalsystem-2}). This result is equivalent to that obtained  in \cite{Florin} for the so called fixed points.

\begin{Corollary}\label{coro:cyclical-2} With the hypotesis of Theorem \ref{theo:relative-equilibria}, a  necessary and sufficient condition for  the initial positions $z_{1,0}, z_{2,0}, \cdots, z_{n,0}$ generating a   M\"{o}bius elliptic  solution for the system
(\ref{eq:motiongral}),  invariant under the Killing vector field (\ref{eq:second-vector-field}), is that the coordinates satisfy the following system of algebraic equations,
\begin{equation} \label{eq:rationalsystem-4}
\frac{32 \, R^6(|z_{k,0}|^2-R^2) z_{k,0} }{ (R^2+ |z_{k,0}|^2)^4  } =
 \sum_{j=1, j \neq k}^n \frac{m_j \, (|z_{j,0}|^2+R^2)^2(R^2+\bar{z}_{j,0}z_{k,0})(z_{j,0}-z_{k,0})}{|z_{j,0}-z_{k,0}|^3 
\, |R^2+ \bar{z}_{j,0} z_{k,0}|^3}
\end{equation}
and the velocity in each particle is given by the relation $ \dot{z}_{k,0} = 2 i z_{k,0}$, for $k=1,2,\cdots, n$.
\end{Corollary}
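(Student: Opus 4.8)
The plan is to deduce the corollary directly from part \textbf{b} of Theorem \ref{theo:relative-equilibria} by exploiting the rotational symmetry of the flow generated by the second Killing field. First I would integrate the velocity prescription: the equation (\ref{eq:second-vector-field-1}), namely $\dot z_k = 2i z_k$, has the explicit solution $z_k(t) = e^{2it} z_{k,0}$, which is precisely the orbit of $z_{k,0}$ under the one-parameter subgroup (\ref{eq:second-vector-field}). Thus any M\"{o}bius elliptic solution invariant under this subgroup has this form, and by Theorem \ref{theo:relative-equilibria}(b) it must satisfy the relation (\ref{eq:rationalsystem-2}) for \emph{every} $t$. The content of the corollary is that this one-parameter family of constraints collapses to the single algebraic system (\ref{eq:rationalsystem-4}), which is simply (\ref{eq:rationalsystem-2}) evaluated at $t=0$.

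The key step, and the one carrying the whole argument, is to show that both sides of (\ref{eq:rationalsystem-2}) are equivariant: each scales by the common phase $e^{2it}$ under the substitution $z_k \mapsto e^{2it} z_{k,0}$. The crucial algebraic observation is that the squared modulus $|z_k|^2$ and the mixed Hermitian products $\bar z_j z_k$ are invariant under the simultaneous rotation, since $\overline{e^{2it} z_j}\,e^{2it} z_k = \bar z_j z_k$. Consequently, on the left-hand side the factors $(|z_k|^2 - R^2)$ and $(R^2 + |z_k|^2)^4$ are unchanged, while the lone factor $z_k$ supplies one power of $e^{2it}$; on the right-hand side every quantity depending only on $|z_j|^2$, $|z_k|^2$ and $\bar z_j z_k$ (hence $|R^2 + \bar z_j z_k|$, the factor $R^2 + \bar z_j z_k$, and $|z_j - z_k|$) is invariant, while the single displacement factor $(z_j - z_k)$ again supplies exactly one power of $e^{2it}$. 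Therefore (\ref{eq:rationalsystem-2}) at time $t$, evaluated along $z_k(t) = e^{2it} z_{k,0}$, is obtained from (\ref{eq:rationalsystem-4}) by multiplying both sides by $e^{2it}$, and the two systems are equivalent.

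With the equivariance in hand, the corollary follows as a two-way implication. For necessity, if the data $z_{1,0},\dots,z_{n,0}$ generate such a M\"{o}bius elliptic solution, then the velocity $\dot z_{k,0} = 2i z_{k,0}$ is forced by (\ref{eq:second-vector-field-1}), and evaluating (\ref{eq:rationalsystem-2}) at $t=0$ gives exactly (\ref{eq:rationalsystem-4}). For sufficiency, if (\ref{eq:rationalsystem-4}) holds and the velocities are set to $\dot z_{k,0} = 2i z_{k,0}$, then the curve $z_k(t) = e^{2it} z_{k,0}$ satisfies (\ref{eq:rationalsystem-2}) for all $t$ by the scaling argument, so by Theorem \ref{theo:relative-equilibria}(b) it is a M\"{o}bius elliptic solution invariant under (\ref{eq:second-vector-field}).

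The main obstacle is purely the equivariance computation of the second paragraph; once one isolates that only the $|z_j|^2$-type moduli and the mixed products $\bar z_j z_k$ enter the coefficients, all of them rotation-invariant, while each of the two displacement factors $z_k$ and $z_j - z_k$ carries exactly one phase, the cancellation is immediate. I would also remark that this is precisely why these orbits deserve the name \emph{relative equilibria}: the configuration is rigid up to the uniform rotation $e^{2it}$, so verifying the defining relation at a single instant $t=0$ suffices to guarantee it for all time.
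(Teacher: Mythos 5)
Your proposal is correct and follows essentially the same route as the paper: both arguments rest on the observation that under the common rotation $z_k \mapsto e^{2it}z_{k,0}$ all moduli and mixed products $\bar z_j z_k$ are invariant while each side of (\ref{eq:rationalsystem-2}) picks up exactly one factor of $e^{2it}$, so the functional system collapses to its evaluation at $t=0$. Your write-up is in fact slightly more careful than the paper's (which contains a small typo, writing $e^{\pm it}$ instead of $e^{\pm 2it}$ in the phase-cancellation step), but the substance is identical.
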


\begin{proof} Let $w_k = w_k(t)= e^{2 it} \, z_{k,0}$ be the action of the Killing vector field (\ref{eq:second-vector-field}) in the initial condition point $z_{k,0}$, with velocity $ \dot{z}_{k,0} = 2 i z_{k,0}$. If we multiply equation (\ref{eq:rationalsystem-4})  by the number $ e^{2it}$ in both sides, and use the equality $\bar{w}_j(t) w_k (t) = \bar{z}_{j,0} e^{-it} \, z_{k,0}e^{it} = \bar{z}_{j,0} \, z_{k,0}$, then we obtain the system,
\begin{equation} 
\frac{32 \, R^6(|w_k|^2-R^2) w_k }{ (R^2+ |w_k|^2)^4} =
 \sum_{j=1, j \neq k}^n \frac{m_j \, (|w_j|^2+R^2)^2(R^2+\bar{w}_j w_k)(w_j-w_k)}{|w_j-w_k|^3 \, |R^2+ \bar{w}_j w_k|^3},
\end{equation}
which shows that $w_k(t)$ is a solution of (\ref{eq:rationalsystem-2}).

The converse claim follows directly if in system (\ref{eq:rationalsystem-2}) we put $t=0$. This proves the Corollary.
\end{proof}

 In \cite{Diac, Diac-Perez} the authors show examples of M\"{o}bius  elliptic  solutions invariant under the Killing vector field (\ref{eq:second-vector-field-1})  on the sphere embedded  in $\mathbb{R}^3$, but when all the particles are sited  for all time on the same circle obtained of intersecting with an horizontal plane. In \cite{Perez} you can find examples in the two and three body problems defined on $\mathbb{M}^2_{R}$. 

The following result, which is our version of the  principal axis Theorem of Euler in $\mathbb{M}_R^2$,  shows that such that for studyng the whole set of elliptic M\"{o}bius solutions it is sufficient with studying those invariant under
 the Killing vector field (\ref{eq:second-vector-field-1}).

\begin{Lemma}\label{lema:principal}  The solutions of  equations of motion (\ref{eq:motiongral}) invariants under the Killing vector fields  
(\ref{eq:first-vector-field-1}) and  (\ref{eq:third-vector-field-1}) can be carried isometrically  into those solutions of  (\ref{eq:motiongral}) invariant under the  Killing vector field  (\ref{eq:second-vector-field-1}).
\end{Lemma}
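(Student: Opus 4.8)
The statement is essentially a conjugacy claim: the three Killing vector fields $X_1, X_2, X_3$ generating the flows (\ref{eq:first-vector-field-1}), (\ref{eq:second-vector-field-1}), (\ref{eq:third-vector-field-1}) all lie in the Lie algebra $su(2)$, and they are conjugate to one another under the adjoint action of $SU(2)$. I would exploit the fact that $SU(2)/\{\pm I\}$ is the group of proper isometries of $\mathbb{M}^2_R$ (stated in Section \ref{sec:mobius-group}), so any inner automorphism realizing such a conjugacy is implemented by an honest isometry of the space form. The upshot is that a solution invariant under one of these one-parameter subgroups is carried, by the corresponding isometry, to a solution invariant under the standard rotational subgroup (\ref{eq:second-vector-field-1}).

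Let me fix the approach concretely. First I would observe that the three matrices $X_1, X_2, X_3$ all have the same characteristic polynomial, namely $\mu^2 + 1 = 0$ (each has trace zero and determinant $1$), so each is conjugate in $SU(2)$ to the diagonal generator $X_2$. Explicitly, I would produce matrices $C_1, C_3 \in SU(2)$ with $C_1 X_1 C_1^{-1} = X_2$ and $C_3 X_3 C_3^{-1} = X_2$; these can be read off since $X_1, X_3$ are obtained from $X_2$ by the elementary unitary rotations that permute the Pauli basis. Conjugating the flow then gives $C_j \exp(t X_j) C_j^{-1} = \exp(t X_2)$, which at the level of M\"obius transformations reads $f_{C_j} \circ f_j(t, \cdot) \circ f_{C_j}^{-1} = f_2(t, \cdot)$.

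Next I would turn this flow conjugacy into a correspondence of solutions. Suppose $\mathbf{z}(t) = (z_1(t), \dots, z_n(t))$ is a solution of (\ref{eq:motiongral}) invariant under the subgroup generated by $X_1$ (the case of $X_3$ being identical). Define $w_k(t) = f_{C_1}(z_k(t))$. Because $f_{C_1}$ is an isometry of $(\mathbb{M}^2_R, ds^2)$, it preserves the metric (\ref{met-c}), the geodesic distances $d_{kj}$ and hence the cotangent potential $U_R$, so it maps solutions of the equations of motion (\ref{eq:motiongral}) to solutions: this is the invariance of the Vlasov--Poisson equation (\ref{eq:Vlasov-Poisson}) under isometries, the left side being the geodesic operator and the right side the metric gradient of an isometry-invariant potential. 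The invariance of $\mathbf{z}(t)$ under $\{\exp(tX_1)\}$ combined with the flow conjugacy then shows that $\mathbf{w}(t)$ is invariant under $\{\exp(tX_2)\}$, i.e. it is a solution of the type described by (\ref{eq:rationalsystem-2}). This establishes the claim, and the isometry $f_{C_1}$ is exactly the ``carrying'' map asserted.

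The main obstacle, and the step I would write most carefully, is the justification that $f_{C_j}$ genuinely sends solutions to solutions. One must verify that conjugating the equations of motion (\ref{eq:motiongral}) by an element of $SU(2)/\{\pm I\}$ leaves them invariant; this is not completely automatic from the formula, since (\ref{eq:motiongral}) is written in a fixed complex chart and the conformal factor $\lambda(z_k, \bar z_k)$ and the Christoffel term both transform under $f_{C_j}$. The clean way is to argue invariantly: equation (\ref{eq:Vlasov-Poisson}) is coordinate-free, its left-hand side is the covariant geodesic acceleration and its right-hand side is $\nabla U_R$, and an isometry commutes with the covariant derivative and preserves $U_R$ (because $U_R$ depends only on the pairwise geodesic distances $d_{kj}$, which are isometry invariants). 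Hence any isometry maps solutions to solutions, and in particular so does $f_{C_j} \in SU(2)/\{\pm I\}$. I would state this as the key lemma and then let the algebraic conjugacy $C_j X_j C_j^{-1} = X_2$ do the rest.
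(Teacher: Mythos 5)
Your proposal is correct, and it rests on the same underlying idea as the paper's proof: exhibit an isometry in $SU(2)/\{\pm I\}$ that conjugates the Killing fields (\ref{eq:first-vector-field-1}) and (\ref{eq:third-vector-field-1}) into (\ref{eq:second-vector-field-1}), then use it to carry invariant solutions to invariant solutions. The execution, however, is genuinely different. The paper works entirely in the chart: it writes $w=(az+b)/(-\bar{b}z+\bar{a})$, imposes the pushforward condition relating $\dot z = 2iz$ to $\dot w = 1+w^2$, reduces this to the quadratic identity (\ref{eq:quadratic-euler}), solves the resulting system (\ref{eq:pair-euler}) for $a,b$, and exhibits the explicit family of conjugating matrices (\ref{eq:matrix-euler}), finishing with the uniqueness theorem for the ODE with data at the fixed points $\pm i$. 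You instead establish the conjugacy abstractly in the Lie algebra: $X_1,X_2,X_3$ are anti-Hermitian with common characteristic polynomial $\mu^2+1$, hence $SU(2)$-conjugate, and $C\exp(tX)C^{-1}=\exp(tCXC^{-1})$ transports the flows; this is shorter and computation-free, at the cost of not producing the explicit matrices the paper displays (a detail: the conjugator exists by surjectivity of the adjoint map $SU(2)\to SO(3)$, though not every permutation of the Pauli basis is orientation-preserving, so "rotation aligning the axes" is the precise statement). The more substantive difference is that you isolate, as a key lemma, the fact that an isometry of $(\mathbb{M}^2_R, ds^2)$ maps solutions of (\ref{eq:motiongral}) to solutions, justified by the coordinate-free form (\ref{eq:Vlasov-Poisson}) together with the invariance of $U_R$ under isometries (it depends only on the mutual geodesic distances). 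This is precisely the point where the chart-bound equation (\ref{eq:motiongral}), with its conformal factor and Christoffel term, could cause doubt; the paper's proof uses this preservation implicitly when it speaks of carrying the foliation of solutions, so your version closes a step the paper leaves to the reader.
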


\begin{proof} 

For this is sufficient with showing that the Killing vector  (\ref{eq:first-vector-field-1}) and  (\ref{eq:third-vector-field-1}) are conjugated with the one  (\ref{eq:second-vector-field-1}).

We give the proof for the Killing vector fields (\ref{eq:first-vector-field-1})  and  (\ref{eq:second-vector-field-1}). 
The other case follows in the same way.

Consider an element 
\begin{equation}
 A= \left(\begin{array}{cc}
    a        &  b     \\
   -\bar{b} & \bar{a}   \\
    \end{array}\right) \in {\rm SU} (2)
\end{equation}
and the corresponding isometric  M\"{o}bius transformation in $\mathbb{M}^2_{R}$,
\begin{equation}
w=\frac{az + b}{-\bar{b}z+ \bar{a} }
\end{equation}
which carries the foliation of solutions of the Killing vector field (\ref{eq:first-vector-field}) into those  solutions of the isometric vector field (\ref{eq:second-vector-field}).

This is, such that transformation carries the Killing vector field $\dot{z}= 2 i z$ into the one $\dot{w}= 1+w^2$, if and only if,
\begin{equation}
 \frac{2 i z}{( -\bar{b}z+ \bar{a})^2 }= \frac{\dot{z}}{( -\bar{b}z+ \bar{a})^2 }=\dot{w}= 1+w^2 =  \frac{(az+ b)^2 + ( -\bar{b}z+\bar{a})^2} {( -\bar{b}z+ \bar{a})^2 }, 
\end{equation}
which, for $ -\bar{b}z+\bar{a} \neq 0$,  allows us to the equation
\begin{equation}\label{eq:quadratic-euler} 
2 i z= (a^2+ \bar{b}^2) z^2 + 2( a b-\bar{a}\bar{b}) z+ (b^2+ \bar{a}^2).  
\end{equation}

Since equation (\ref{eq:quadratic-euler}) must hold for all $z$ in and open subset of $\mathbb{C}$ then we obtain the pair of equations
\begin{eqnarray}\label{eq:pair-euler}
 a^2+ \bar{b}^2 &= & 0 \nonumber \\
 a b-\bar{a}\bar{b} &=&   i.  \nonumber \\
\end{eqnarray}

The factors $a-i  \bar{b}=0$ and $a+ i \bar{b}=0$ in the first equation of the system (\ref{eq:pair-euler})  can be verified are equivalent to $w(0) = i$ and $w(\infty)=- i$. If we multiply by $a$ in both sides of  the second equation of compatibility in the same system  (\ref{eq:pair-euler}) and use the first equation of that system and   the equality $|a|^2 + |b|^2 = 1$, it becomes into the first factor $ i a+ \bar{b}=0$.

 If we put $a=\alpha_1 + i \alpha_2$ and $b= \beta_1+ i \beta_2$, then such that equations implies that $b=\alpha_2 + i \alpha_1$.

Therefore, the searched  M\"{o}bius transformation has associated any rotation matrix
\begin{equation}\label{eq:matrix-euler}
 A= \left(\begin{array}{cc}
   \alpha_1+ \alpha_2 i                                   &    \alpha_2+  i \alpha_1    \\
  - \alpha_2+  i \alpha_1 &   \alpha_1-  \alpha_2 i     \\
    \end{array}\right) \in {\rm SU} (2),
\end{equation}
which can be seen as an element in $SO(2) \subset {\rm SU} (2)$ with $ \displaystyle \alpha_1^2  + \alpha_2^2 = \frac{1}{2}$.

Conclusion is given by the Existence and Uniqueness Theorem for the complex differential equation associated to the given vector field
 (\ref{eq:second-vector-field-1}) with  initial conditions in the fixed  points  $w=-i$ and $w=i$.

This ends the proof.
\end{proof}

\begin{Remark}\label{unique} The above method in Lemma \ref{lema:principal} will be  employed for proving the existence of  M\"{o}bius solutions invariant under the Killing vector fields  (\ref{eq:first-vector-field-1}) and (\ref{eq:third-vector-field-1}),  by carrying isometrically them into the Killing vector field (\ref{eq:second-vector-field-1}).
\end{Remark}

%%%%%%%%%%%%%%%%%%%%%%%%%%%%%%%%%%%%%%
\subsection{Examples of   M\"{o}bius  elliptic solutions in the $n$-body problem}
%%%%%%%%%%%%%%%%%%%%%%%%%%%%%%%%%%%%%

We show here several types of solutions for the $n$-body problem by using the algebraic system (\ref{eq:rationalsystem-4}).
This is, for the initial real positions $z_k(0)= z_{k,0}$ and with velocities
$\dot{z}_{k,0}= 2 i \, z_{k,0}$ we construct, by using the Corollary \ref{coro:cyclical-2}, periodic circular orbits for obtaining solutions which generalize those obtained in \cite{Perez} for equal masses.

\smallskip

From here now on in this section, the words {\it inside of the geodesic circle $|z|= R$} means that in the
sphere of radius $R$ embedded in $\mathbb{R}^3$ the corresponding motion is realized in the south-hemisphere  while the words
{\it outside of the geodesic circle $|z|= R$} means that the corresponding motion is realized in the north-hemisphere (see \cite{Reyes-Perez}). On other hand, the word {\it degenerated solution} means that for small perturbation on the mass ratio the system changes on the number of M\"{o}bius elliptic 
solutions close to such that solution. Moreover, this property of {\it non-degeneracy} of the solutions must be related to the {\it stability} of the solutions  in the sense of \cite{Florin2}, since they are obtained under conditions of transversality (see \cite{Guille}).

\smallskip

We will use the following notation along this section for short in the results.
For $z \in \mathbb{M}_R^2$ we denote by
\begin{enumerate}
\item[\bf a.] $P_S$ to the {\it south pole}  $z= 0$.
\item[\bf b.] $T_S$ to the {\it southern tropic}  $|z|= (\sqrt{2}-1)R$.
\item[\bf c.] $E_R$ to the {\it geodesic circle}  $|z|= R$.
\item[\bf d.] $T_N$ to the {\it northern tropic}  $\displaystyle |z|= \frac{R}{\sqrt{2}-1}$
\item[\bf e.] $P_N$ to the {\it north pole}  $z= \infty$.
\item[\bf f.] $ \Omega_1$ to the open region from $P_S$ to the southern tropic $T_S$,
\[ \Omega_1 = \{ z \in \mathbb{M}_R^2 \quad | \quad 0 < |z| < (\sqrt{2}-1)R \}.  \]
\item[\bf g.] $ \Omega_2$ to the open region from $T_S$ to the geodesic circle $E_R$, 
\[ \Omega_2 = \{ z \in \mathbb{M}_R^2 \quad | \quad (\sqrt{2}-1)R  < |z| < R \}.  \]
\item[\bf h.] $ \Omega_3$ to the open region from $E_R$ to the northern tropic $T_N$, 
\[ \Omega_3 = \{ z \in \mathbb{M}_R^2 \quad | \quad R < |z| < \displaystyle \frac{R}{\sqrt{2}-1} \}.  \]
\item[\bf i.] $ \Omega_4$ to the open region from $T_N$ to the north pole $P_N$, 
\[ \Omega_4 = \{ z \in \mathbb{M}_R^2 \quad | \quad \displaystyle \frac{R}{\sqrt{2}-1} < |z| \}.  \]
\end{enumerate}

%%%%%%%%%%%%%%%%%%%%%%%%%%%%%%%%%%%%%%
\subsubsection{\bf New examples of M\"{o}bius elliptic solutions in the two-body problem  in $\mathbb{M}_R^2$}
%%%%%%%%%%%%%%%%%%%%%%%%%%%%%%%%%%%%%

For the two-body case, we will  show  solutions of ({\ref{eq:motiongral}) invariant under the vector field (\ref{eq:second-vector-field-1}) such that the two masses move along two different circles.

Firstly we observe that the system (\ref{eq:rationalsystem-4}) for the two body problem becomes  into the simple algebraic system (not depending 
on the time $t$),
\begin{eqnarray} \label{eq:rationalsystem-two-body}
\frac{32 \, R^6(|z_1|^2-R^2) z_1 }{ (R^2+ |z_1|^2)^4  } &=&
 \frac{m_2 \, (|z_2|^2+R^2)^2(R^2+\bar{z}_2z_1)(z_2-z_1)}{|z_2-z_1|^3 \, |R^2+  z_1 \bar{z}_2|^3}, \nonumber \\
 \frac{32 \, R^6(|z_2|^2-R^2) z_2 }{ (R^2+ |z_2|^2)^4  } &=&
 \frac{m_1 \, (|z_1|^2+R^2)^2(R^2+\bar{z}_1z_2)(z_1-z_2)}{|z_1-z_2|^3 \, |R^2+ \bar{z}_1 z_2|^3}, \nonumber \\
\end{eqnarray}

By Corollary \ref{coro:cyclical-2},  a necessary and sufficient condition for the
existence of a  M\"{o}bius solution invariant under the Killing vector field (\ref{eq:second-vector-field})
is that the initial conditions $z_1(0)=\alpha$, $z_2(0)=\beta$ satisfy equations (\ref{eq:rationalsystem-two-body}).

Moreover, it can be seen that in equations (\ref{eq:rationalsystem-two-body}), $z_1=\alpha$ is a real number, if and only if, $z_2=\beta$ is also a real number (see \cite{Perez, Reyes-Perez}). We obtain the following result.

\begin{Lemma}\label{lema:new-equilibria} Let $0 <\alpha < R$ be a real number. For the two body problem
in $\mathbb{M}^2_R$ with equal masses, a necessary  condition for the
existence of one M\"{o}bius elliptic solution invariant under the Killing vector field (\ref{eq:second-vector-field}),
is that, for the initial real positions $z_1(0)=\alpha$ and  $z_2(0)=\beta$ with corresponding velocities
$\dot{z}_1(0)= 2 i \, \alpha$ and $\dot{z}_2(0)= 2 i \, \beta$, holds one of the some conditions
\begin{enumerate}
\item[\bf 1.] $\beta_1 =-\alpha$ inside of the geodesic circle, or its geodesic conjugated point $\displaystyle \beta_2= \frac{R^2}{\alpha}$ outside of the geodesic circle.
\item[\bf 2.] For $\displaystyle \beta_3= \frac{R(\alpha-R)}{\alpha+R}$ inside of the geodesic circle, or its geodesic conjugated point $\displaystyle \beta_4=- \frac{R(R+\alpha)}{\alpha-R}$ outside of the geodesic circle,.
\end{enumerate}
\end{Lemma}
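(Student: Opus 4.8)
The plan is to reduce the equal--mass two--body system (\ref{eq:rationalsystem-two-body}) (with $m_1=m_2=m$) to a single mass--free algebraic equation in the real initial positions $\alpha=z_1(0)$, $\beta=z_2(0)$, and then solve it explicitly. Since for real positions all of $R^2+\alpha\beta$, $|\beta-\alpha|$ and $|R^2+\alpha\beta|$ are common to both equations and only the factors $(\beta^2+R^2)^2$, $(\alpha^2+R^2)^2$ and the sign of $(\beta-\alpha)$ differ, I would divide the first equation by the second. Because $0<\alpha<R$ forces the left side of the $z_1$--equation to be nonzero, we get $m\neq0$, $\beta\neq\alpha$ and $R^2+\alpha\beta\neq0$, so the division is legitimate; it cancels $m$, all modulus factors and $R^2+\alpha\beta$, and uses $(\beta-\alpha)/(\alpha-\beta)=-1$, leaving the single relation
\begin{equation}\label{eq:plan-P}
(\alpha^2-R^2)\,\alpha\,(R^2+\beta^2)^2+(\beta^2-R^2)\,\beta\,(R^2+\alpha^2)^2=0 .
\end{equation}
This quartic in $\beta$ is the necessary condition; the velocity prescription $\dot z_{k,0}=2iz_{k,0}$ is furnished directly by Corollary \ref{coro:cyclical-2}, so the whole matter reduces to solving (\ref{eq:plan-P}).

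The key observation is that geodesic conjugation sends a real $\beta$ to $-R^2/\beta$ (this is exactly the involution defining $\Delta(A)$ in (\ref{eq:antipodalset})), and that $u=\beta-R^2/\beta$ is invariant under it. I would exploit this by dividing (\ref{eq:plan-P}) by $\beta^2$ (note $\beta=0$ is not a root, as (\ref{eq:plan-P}) then reads $(\alpha^2-R^2)\alpha R^4\neq0$) and rewriting in $u$. Using $(\beta+R^2/\beta)^2=u^2+4R^2$, the quartic (\ref{eq:plan-P}) collapses to the quadratic
\begin{equation}\label{eq:plan-u}
\alpha(\alpha^2-R^2)\,u^2+(R^2+\alpha^2)^2\,u+4R^2\alpha(\alpha^2-R^2)=0 .
\end{equation}
Since $u$ is conjugation--invariant, each root of (\ref{eq:plan-u}) lifts, through $\beta^2-u\beta-R^2=0$, to a geodesic--conjugate \emph{pair} of values of $\beta$; this is precisely the ``point or its geodesic conjugate'' dichotomy in the statement.

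It then remains to solve (\ref{eq:plan-u}) and lift. The step I expect to matter most is verifying that the discriminant of (\ref{eq:plan-u}) is a perfect square: with $p=R^2+\alpha^2$, $q=R^2-\alpha^2$ and the identity $p^2-q^2=4R^2\alpha^2$, it equals $(p^2-2q^2)^2$, which forces the roots to be rational, namely $u_+=(R^2-\alpha^2)/\alpha$ and $u_-=4R^2\alpha/(R^2-\alpha^2)$. Substituting $u_+$ into $\beta^2-u\beta-R^2=0$ gives $\alpha\beta^2+(\alpha^2-R^2)\beta-\alpha R^2=0$, with roots $\beta_1=-\alpha$ and its conjugate $\beta_2=R^2/\alpha$; substituting $u_-$ gives $(R^2-\alpha^2)\beta^2-4R^2\alpha\beta-R^2(R^2-\alpha^2)=0$, whose discriminant is again a perfect square, $4R^2(R^2+\alpha^2)^2$, yielding $\beta_3=R(\alpha-R)/(\alpha+R)$ and its conjugate $\beta_4=-R(R+\alpha)/(\alpha-R)$. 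I would close by recording the placements relative to $E_R$: for $0<\alpha<R$ one has $|\beta_1|=\alpha<R$ and $|\beta_3|=R(R-\alpha)/(R+\alpha)<R$ (inside), while $|\beta_2|=R^2/\alpha>R$ and $|\beta_4|=R(R+\alpha)/(R-\alpha)>R$ (outside), with each pair satisfying $\beta\beta'=-R^2$, which confirms the geodesic--conjugacy. The only genuine obstacle is the two perfect--square discriminant identities; everything else is bookkeeping.
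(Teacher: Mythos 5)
Your proof is correct, and its first half is the same as the paper's: for real positions the paper also divides the two equations of (\ref{eq:rationalsystem-two-body}) (writing $z_2=\beta=\lambda\alpha$) and lands on the quartic (\ref{eq:single-two-body-2}), which is exactly your mass-free relation after the change of variable $\beta=\lambda\alpha$ and multiplication by $-\alpha$; your justification that the division is legitimate (nonvanishing of the left side of the $z_1$--equation forces $\beta\neq\alpha$ and $R^2+\alpha\beta\neq0$, hence both sides of the $z_2$--equation are nonzero) is also sound. Where the two arguments genuinely differ is in how the quartic is solved. The paper simply exhibits the four roots $\lambda_1=-1$, $\lambda_2=R^2/\alpha^2$, $\lambda_3=R(\alpha-R)/(\alpha(\alpha+R))$, $\lambda_4=-R(R+\alpha)/(\alpha(\alpha-R))$, saying only that they are ``founded with algebraic methods,'' so the reader can at best verify them by substitution. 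You instead exploit the invariance of the quartic under the geodesic conjugation $\beta\mapsto -R^2/\beta$ of (\ref{eq:antipodalset}), reduce it to a quadratic in the invariant $u=\beta-R^2/\beta$, check that its discriminant is the perfect square $(p^2-2q^2)^2$ with $p=R^2+\alpha^2$, $q=R^2-\alpha^2$, and then lift each root $u_{\pm}$ back through $\beta^2-u\beta-R^2=0$. This buys two things the paper's route does not: an actual derivation of the roots rather than an unexplained list, and a structural explanation of why the solutions organize into the geodesic-conjugate pairs $\{-\alpha,\,R^2/\alpha\}$ and $\{R(\alpha-R)/(\alpha+R),\,R(R+\alpha)/(R-\alpha)\}$ (each $u$-root corresponds to one conjugate pair, since $\beta\beta'=-R^2$ for the two lifts), a pairing the paper only records after the fact. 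Your concluding placement checks relative to the geodesic circle $|z|=R$ agree with the paper's ``simple analysis on the positions.''
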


\begin{proof} In order of finding solutions for equation (\ref{eq:rationalsystem-two-body}), we search those where both particles are
sited in a same geodesic meridian along all the isometric action. 
 
 This is, without of generality we can put $z_1=\alpha$ in the real axis, and
$z_2= \beta=\lambda \alpha$ for some suitable real  value of $\lambda$, which will be founded with algebraic methods. 

For this case we obtain that $R^2+ \bar{z}_1 z_2=R^2+ \bar{z}_2 z_1$, and then, avoiding collisions and conjugated points, when we divide the left hand sides and the right hand ones of equation (\ref{eq:rationalsystem-two-body}), it becomes into the equation
\begin{equation}\label{eq:single-two-body-1}
0 =\alpha [ m_1 (R^2-\alpha^2)(R^2+\lambda^2\alpha^2)^2 + m_2 \lambda(R^2-\lambda^2\alpha^2)(R^2+\alpha^2)^2],
\end{equation}
and for  $\alpha\neq0$, we obtain the simple equation in the unknown $\lambda$,
\begin{equation}\label{eq:single-two-body-2}
0 =m_1 (R^2-\alpha^2)(R^2+\lambda^2\alpha^2)^2 + m_2 \lambda(R^2-\lambda^2\alpha^2)(R^2+\alpha^2)^2.
\end{equation}

For the algebraic equation (\ref{eq:single-two-body-2}) with equal masses, the solutions for the unknown $\lambda$ are given by
\[ \lambda_1 =-1, \quad \lambda_2= \frac{R^2}{\alpha^2}, \quad \lambda_3= \frac{R(\alpha-R)}{\alpha(\alpha+R)},
\quad \lambda_4=- \frac{R(R+\alpha)}{\alpha(\alpha-R)},  \]
which given the value of $\alpha$, gives us the initial conditions for $\beta$:
\[ \beta_1 =-\alpha, \quad \beta_2= \frac{R^2}{\alpha}, \quad \beta_3= \frac{R(\alpha-R)}{\alpha+R},
\quad \beta_4=- \frac{R(R+\alpha)}{\alpha-R},  \]

A simple analysis on the positions of such values for $\beta$ proves the the corresponding positions in the items of the Lemma, which ends the proof.
\end{proof}

\smallskip

We obtain the main result of this subsection, the necessary and sufficient conditions for the
existence of classes of  M\"{o}bius elliptic solutions invariant under the Killing vector field (\ref{eq:second-vector-field}) 
for the two body problem.

\begin{Theorem}\label{theo:new-equilibria-main} For the two body problem in $\mathbb{M}^2_R$ with fixed equal masses $m$ we have the following class of 
 M\"{o}bius elliptic solutions,
\begin{enumerate}
\item[\bf 1.] For the mass ratio $\displaystyle  m < 2 R^3$,
\begin{enumerate}
\item[\bf a.] There exists a unique initial condition $0< \alpha_1 <(\sqrt{2}-1)R$ for one circular motion inside $ \Omega_1$,
and for this condition there are two distinct non degenerate circular motions with real initial conditions, 
\begin{enumerate}
\item[\bf i.] $\beta_{1,1} = -\alpha_1$ on the same circle inside $ \Omega_1$.
\item[\bf ii.] $\displaystyle  \beta_{1,2} = \frac{R^2}{\alpha_1}$ for one circle sited in $ \Omega_4$.
\end{enumerate}
\item[\bf b.] There exists a unique initial condition $(\sqrt{2}-1)R < \alpha_2 < R$ for a circular motion inside $ \Omega_1$  
and for this condition, there are two  distinct non degenerate circular motions with initial conditions,
\begin{enumerate}
\item[\bf i.] $\beta_{2,1} = -\alpha_2$ on the same circle inside $ \Omega_2$.
\item[\bf ii.] $\displaystyle \beta_{2,2} = \frac{R^2}{\alpha_2}$ sited in a different circle inside of $ \Omega_3$.  
\end{enumerate}
\end{enumerate} 
\item[\bf 2.] For the mass ratio $\displaystyle  m  <2 R^3$,
\begin{enumerate}
\item[\bf a.] There exists a unique initial condition $0< \alpha_3 <(\sqrt{2}-1)R$ for one circular motion inside of $ \Omega_1$,
and for this condition there exist two distinct non degenerate circular motions with real initial conditions, 
\begin{enumerate}
\item[\bf i.] $\displaystyle \beta_{3,1}=  \frac{R(\alpha_1-R)}{\alpha_1+R}$ sited on different  circle, but inside of $ \Omega_1$.
\item[\bf ii.] $\displaystyle \beta_{3,2}=  \frac{R(\alpha_1+R)}{R-\alpha_1}$ sited on a different  circle inside of  $ \Omega_4$. 
\end{enumerate}
\item[\bf b.] There exists a unique initial condition  $(\sqrt{2}-1)R < \alpha_4<R$ for one circular motion inside of $ \Omega_2$,
and for this condition there are two distinct non degenerate circular motions with real initial conditions, 
\begin{enumerate}
 \item[\bf i.] $\displaystyle \beta_{4,1}= \frac{R(\alpha_2-R)}{\alpha_2+R}$ sited on different circle, but inside of $ \Omega_2$.
 \item[\bf ii.]$\displaystyle \beta_{4,2}=  \frac{R(\alpha_2+R)}{R-\alpha_2}$ sited on different circle and  inside of $ \Omega_3$.  
\end{enumerate}
\end{enumerate}
\item[\bf 3.] For the the mass ratio  $\displaystyle  m = 2 R^3$, there exist the unique initial condition $\alpha_{tan}=(\sqrt{2}-1)R$ for a circular motion along $T_S$, and for this condition, there exist two distinct degenerate solutions with real initial conditions,
\begin{enumerate}
\item[\bf a.] $\beta_{{\rm tan},1}=-(\sqrt{2}-1)R$ sited in the same circle $T_S$.  
\item[\bf b.] $\displaystyle \beta_{{\rm tan},2}=\frac{R}{(\sqrt{2}-1)}$ sited in the circle $T_N$.  
\end{enumerate}
\end{enumerate}

When the  mass ratio   $\displaystyle  m > 2 R^3$ there are not M\"{o}bius elliptic solutions for this problem.
\end{Theorem}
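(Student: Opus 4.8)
The plan is to combine the necessary condition already isolated in Lemma~\ref{lema:new-equilibria} with a sufficiency analysis obtained by going back to a single scalar equation of the system (\ref{eq:rationalsystem-two-body}). Lemma~\ref{lema:new-equilibria} shows that, restricting to a common meridian with $z_1=\alpha$ and $z_2=\lambda\alpha$ real, the \emph{ratio} of the two equations of (\ref{eq:rationalsystem-two-body}) forces $\lambda\in\{\lambda_1,\dots,\lambda_4\}$, which split into the two antipodal families $\{\beta=-\alpha,\ \beta=R^2/\alpha\}$ and $\{\beta_3,\beta_4\}$. Dividing the equations, however, only produces a necessary condition; what remains is to decide, for which $\alpha$ and which mass $m$ each candidate actually solves the system. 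So I would substitute each representative position back into the \emph{first} equation of (\ref{eq:rationalsystem-two-body}) and solve for $m$ as a function of $\alpha$.

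Carrying out this substitution for the representative $\beta=-\alpha$ of the first family, and using $R^2+\bar z_2z_1=R^2-\alpha^2$, $|z_2-z_1|=2\alpha$ and $\beta^2+R^2=\alpha^2+R^2$, the whole two--body system collapses, for $0<\alpha<R$, to the single mass--position relation
\[
m=f(\alpha):=\frac{128\,R^6\,\alpha^3\,(R^2-\alpha^2)^3}{(R^2+\alpha^2)^6}.
\]
Repeating the computation for the representative $\beta_3=R(\alpha-R)/(\alpha+R)$ of the second family yields, again for $0<\alpha<R$,
\[
m=g(\alpha):=\frac{8\,R^4\,\alpha\,(R^2-\alpha^2)}{(R^2+\alpha^2)^2}.
\]
Both functions vanish at the endpoints $\alpha=0$ and $\alpha=R$ and are strictly positive in between, so the entire theorem reduces to a one--variable level--set count of $f$ and $g$ on the interval $(0,R)$.

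The heart of the argument is then to show that each of $f$ and $g$ is unimodal on $(0,R)$ and to locate its maximum exactly. Taking logarithmic derivatives and clearing denominators, the critical--point equation for \emph{both} functions reduces, after the substitution $u=\alpha^2/R^2$, to the same quadratic $u^2-6u+1=0$, whose only root in $(0,1)$ is $u=3-2\sqrt{2}=(\sqrt{2}-1)^2$. Hence each of $f,g$ has a single interior critical point, at $\alpha=(\sqrt{2}-1)R$, i.e. precisely on the southern tropic $T_S$, and evaluating there gives the common maximal value $f((\sqrt{2}-1)R)=g((\sqrt{2}-1)R)=2R^3$. I expect this to be the main obstacle: proving unimodality and pinning the maximum exactly at the tropic with value $2R^3$ is what produces the sharp threshold $m=2R^3$ and, through the sign of $f'$ and $g'$, the non-degeneracy/degeneracy dichotomy.

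With unimodality established the classification follows from the intermediate value theorem. For $m<2R^3$ each of $f,g$ attains the value $m$ at exactly two points, one in $\Omega_1=(0,(\sqrt{2}-1)R)$ and one in $\Omega_2=((\sqrt{2}-1)R,R)$; since $f',g'\neq0$ there, these are transversal crossings, hence the non-degenerate solutions of items 1 and 2. For $m=2R^3$ the level is reached only at the double root $\alpha=(\sqrt{2}-1)R\in T_S$, the degenerate solution of item 3, and for $m>2R^3$ there is no preimage, which is the final assertion. It then remains to read off the partner positions: for each admissible $\alpha$ the second mass sits at $\beta=-\alpha$ on the same circle $|z|=\alpha$, while the geodesic--conjugate position $R^2/\alpha$ is obtained by applying the antipodal isometry of $\mathbb{M}_R^2$ (which preserves all geodesic distances and therefore carries solutions of (\ref{eq:motiongral}) into solutions with the same masses); this conjugate lands on the circle $|z|=R^2/\alpha$, which lies in $\Omega_4$ when $\alpha\in\Omega_1$ and in $\Omega_3$ when $\alpha\in\Omega_2$, and at $m=2R^3$ it sends $T_S$ to $T_N$. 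This accounts for items 1a.ii, 1b.ii and 3b, and the same bookkeeping applied to $\beta_3,\beta_4$ completes item 2.
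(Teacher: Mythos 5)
Your reduction to a one--variable level--set count is, in its core, exactly the paper's own argument in different clothing: the paper substitutes the candidates of Lemma~\ref{lema:new-equilibria} into one equation of (\ref{eq:rationalsystem-two-body}) and compares $F(\alpha)=\alpha(R^2-\alpha^2)/(R^2+\alpha^2)^2$ with the constant levels $\sqrt[3]{m}/(4\sqrt[3]{2}\,R^2)$ (first family, equation (\ref{eq:first-single-two-body})) and $m/(8R^4)$ (second family, equation (\ref{eq:second-single-two-body})); your $f=128R^6F^3$ and $g=8R^4F$ are these same equations solved for $m$, so the unimodality, the common maximum $2R^3$ at $\alpha=(\sqrt{2}-1)R$, the transversal/tangential dichotomy, and the counts for $m<2R^3$, $m=2R^3$, $m>2R^3$ all coincide with the paper's. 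Your explicit critical--point computation ($u^2-6u+1=0$ with $u=\alpha^2/R^2$) supplies a detail the paper only asserts.

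The genuine gap is in your final step, the ``partner positions''. The antipodal map $z\mapsto -R^2/\bar z$ is indeed an isometry carrying solutions to solutions, but it moves \emph{both} particles: applied to the solution $(\alpha,-\alpha)$ it produces the configuration $\left(-R^2/\alpha,\,R^2/\alpha\right)$, i.e.\ two particles diametrically opposite on the \emph{same} circle $|z|=R^2/\alpha$ in the northern region --- not the mixed configuration $\left(\alpha,\,R^2/\alpha\right)$ asserted in items {\bf 1.a.ii}, {\bf 1.b.ii} and {\bf 3.b}, in which the first particle stays at $\alpha$ while only its partner sits in $\Omega_4$ (resp.\ $\Omega_3$, $T_N$). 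No isometry can relate these two configurations, because isometries preserve the mutual distance, while $d\left(\alpha, R^2/\alpha\right)=\pi R-d(\alpha,-\alpha)$, which differs from $d(\alpha,-\alpha)$ in general. The paper establishes the items {\bf ii} by a different route: it substitutes $\beta=R^2/\alpha$ (and likewise $\beta_4$) \emph{directly} into system (\ref{eq:rationalsystem-two-body}) and checks that the same scalar equation results; that direct substitution, not an isometry argument, is what your proof must supply --- and note it requires care with signs, which is precisely why the paper's equation (\ref{eq:first-single-two-body}) carries the modulus $|R^2-\alpha^2|$. The same defect propagates to your closing ``same bookkeeping'' clause for item {\bf 2}, whose case {\bf ii} positions $\beta_4$ are likewise not isometric images of the $(\alpha,\beta_3)$ solutions.
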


\begin{proof} We will use the necessary condition given by Lemma \ref{lema:new-equilibria}. In order of prove the sufficient condition, we separate 
the proof in all the cases.
\begin{enumerate}
\item[\bf 1. a.] For the cases $\beta= -\alpha$ and $\displaystyle \beta = \frac{R^2}{\alpha}$, when we substitute in any equation of system (\ref{eq:rationalsystem-two-body}) we obtain the single and same equation,
\begin{equation}\label{eq:first-single-two-body}
\frac{|R^2-\alpha^2| \alpha }{ (R^2+ \alpha^2)^2  } = \frac{\sqrt[3]{m}}{4\sqrt[3]{2} \, R^2}
\end{equation}
which, since the solutions $\beta= -\alpha$ and $\displaystyle \beta = \frac{R^2}{\alpha}$ of this equation appear together with their geodesic conjugates, 
as can be directly checked, we will analyse equation (\ref{eq:first-single-two-body}) for $ 0 < \alpha < R$.

We define the smooth functions,
\begin{equation}\label{eq:first-function-single-two-body}
F(\alpha)= \frac{ (R^2-\alpha^2) \alpha }{ (R^2+ \alpha^2)^2},  \quad G(\alpha) = \frac{\sqrt[3]{m}}{4\sqrt[3]{2} \, R^2}.
\end{equation}
on the interval $[0,R]$.

The function $F$  vanishes in the extremes of the interval and is increasing in the interval $(0, (\sqrt{2}-1)R)$, decreasing in 
$((\sqrt{2}-1)R,R)$, with an absolute maximum 
\begin{equation}
F\left((\sqrt{2}-1)R\right)= \frac{1}{4R}.  
\end{equation}

This implies that, if 
\begin{equation}
F\left((\sqrt{2}-1)R\right)- G\left((\sqrt{2}-1)R\right) = \frac{1}{4R}  - \frac{\sqrt[3]{m}}{4\sqrt[3]{2} \, R^2} >0
\end{equation}
or  equivalently,
$\displaystyle  m < 2 R^3$, the graphs of the functions $F$ and $G$ intersect transversally and in exactly two points, say $(\alpha_1, F(\alpha_1))=(\alpha_1, G(\alpha_1))$ and $(\alpha_2, F(\alpha_2))=(\alpha_2, G(\alpha_2))$ with the corresponding arguments $0 < \alpha_1 < (\sqrt{2}-1)R < \alpha_2 <R $, which define the initial positions for the aforementioned solutions {\bf i} and {\bf ii}. This proves the claim of items
{\bf 1. a.} and {\bf 1. b}. 

On the other hand, if $\displaystyle  m = 2 R^3$, the functions $F$ and $G$ intersect tangentially  and in exactly the point 
$\displaystyle \left((\sqrt{2}-1)R,\frac{1}{4R} \right)$ which define the initial position for the respective degenerate solutions 
{\bf a.} and {\bf b.} of {\bf 3.} (see \cite{Perez}). 

\item[\bf 2. a.] If $\displaystyle \beta = -\frac{R(R-\alpha)}{R+\alpha}$ or $\displaystyle \beta = \frac{R(R+\alpha)}{R-\alpha}$, when we substitute in any equation of system (\ref{eq:rationalsystem-two-body}) we obtain the equation
\begin{equation}\label{eq:second-single-two-body}
\frac{(R^2-\alpha^2) \, \alpha}{(R^2+ \alpha^2)^2} =\frac{ m }{8 R^4}.  
\end{equation}

Since again the solutions $\displaystyle \beta = -\frac{R(R-\alpha)}{R+\alpha}$ and $\displaystyle \beta = \frac{R(R+\alpha)}{R-\alpha}$ of this equation appear together with their geodesic conjugates, as can be again directly checked, we analyse equation (\ref{eq:second-single-two-body}) for the case $ 0 < \alpha < R$.

If we define the smooth functions $F(\alpha)$ and $\displaystyle G(\alpha)= \frac{ m }{8 R^4 }$ as in the item {\bf 1.}, with the same analysis we obtain that
\begin{equation}
F\left((\sqrt{2}-1)R\right)- G\left((\sqrt{2}-1)R\right) = \frac{1}{4R}  - \frac{ m }{8 R^4 } >0
\end{equation}
or  equivalently,
$\displaystyle  m < 2 R^3$, the graphs of the functions $F$ and $G$ intersect transversally and in exactly two points, say again $(\alpha_3, F(\alpha_3))=(\alpha_3, G(\alpha_3))$ and $(\alpha_4, F(\alpha_4))=(\alpha_4, G(\alpha_4))$ with the found arguments $0 < \alpha_3 < (\sqrt{2}-1)R < \alpha_4 <R $, which define the initial positions for the corresponding solutions {\bf i} and {\bf ii}. This proves the claims {\bf 2. a.} and  {\bf 2. b.} 
 \end{enumerate}
 
 On the other hand, if $\displaystyle  m = 2 R^3$, the functions $F$ and $G$ intersect tangentially  and in exactly the point 
$\displaystyle \left((\sqrt{2}-1)R,\frac{1}{4R} \right)$ which define the initial position for the corresponding  degenerate solutions
{\bf a.} and {\bf b.} of {\bf 3.} (see again \cite{Perez}).

 It is clear that for the mass ratio  $\displaystyle  m > 2 R^3$ there are not M\"{o}bius elliptic solutions of the problem. This ends the proof.
 \end{proof}

\begin{Corollary} There are only teen different class of  M\"{o}bius elliptic solutions for the $2$-body problem in $\mathbb{M}_R^2$. Eight of them are non-degenerate  while four of them are.
\end{Corollary}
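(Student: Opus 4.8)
The plan is to prove this as a pure enumeration, reading the classes straight off Theorem \ref{theo:new-equilibria-main}. First I would collect the solutions produced in the transversal regime $m < 2R^3$: Case~1 supplies the four classes with second position $\beta \in \{-\alpha_1,\, R^2/\alpha_1,\, -\alpha_2,\, R^2/\alpha_2\}$ (items \textbf{1.a} and \textbf{1.b}), arising from the two crossings $0<\alpha_1 < (\sqrt{2}-1)R < \alpha_2<R$ of the graphs in (\ref{eq:first-single-two-body}); Case~2 supplies four further classes built from $\beta_{3,1},\beta_{3,2},\beta_{4,1},\beta_{4,2}$ (items \textbf{2.a} and \textbf{2.b}), arising from the analogous crossings in (\ref{eq:second-single-two-body}). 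Then I would adjoin the tangential regime $m = 2R^3$, in which Case~3 contributes the two remaining classes realized along the southern tropic $T_S$ and along the northern tropic $T_N$. Tallying these lists produces the full collection of classes, the first eight coming from the two $m<2R^3$ cases and the rest from $m=2R^3$.

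Next I would settle the split into degenerate and non-degenerate using the paper's own criterion, namely that a solution is \emph{degenerate} precisely when an arbitrarily small change of the mass ratio alters the number of nearby M\"obius elliptic solutions. For this I would reuse the pair $F(\alpha)$ and $G(\alpha)$ from the proof of Theorem \ref{theo:new-equilibria-main}. When $m < 2R^3$ the level $G$ meets the graph of $F$ transversally at two interior arguments, so each of the eight resulting classes persists under a small perturbation of $m$; by the criterion they are non-degenerate. When $m = 2R^3$ the constant $G$ equals the absolute maximum $F\left((\sqrt{2}-1)R\right)=1/(4R)$ and the contact is tangential: perturbing $m$ upward destroys the intersection, while perturbing it downward splits it into two transversal crossings. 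This fold is exactly the change of count demanded by the definition, so the classes arising at $m=2R^3$ are the degenerate ones.

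The step I expect to be the main obstacle is confirming that the enumerated classes are genuinely distinct, so that nothing is over- or under-counted, and in particular fixing the bookkeeping of geodesic-conjugate partners. Within Lemma \ref{lema:new-equilibria} each planar configuration $(\alpha,\beta)$ occurs together with its geodesic conjugate, and one checks directly that $R^2/\alpha$ is the conjugate of $-\alpha$, so I must decide consistently whether a conjugate pair represents one dynamical class or two; miscounting here would misread the four roots $\lambda_1,\dots,\lambda_4$ of (\ref{eq:single-two-body-2}). I would therefore verify that the advertised $\beta$-values are pairwise distinct and fall in the asserted regions $\Omega_1,\dots,\Omega_4$, so that the ``same circle'' versus ``different circle'' distinctions in Theorem \ref{theo:new-equilibria-main} are real rather than spurious coincidences, and that no isometry commuting with the flow (\ref{eq:second-vector-field-1}) identifies two of the listed classes. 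Once this identification of conjugate pairs and the separation into the regions $\Omega_j$ are in place, the final tally of distinct non-degenerate and degenerate classes follows immediately.
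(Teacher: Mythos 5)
Your overall route is the same as the paper's: the corollary is stated there with no separate proof, as an immediate enumeration of the classes listed in Theorem \ref{theo:new-equilibria-main}, with the transversal/tangential dichotomy supplying the non-degenerate/degenerate split. Your handling of the non-degenerate part is sound, and can even be made sharper than what you wrote: for $m<2R^3$ the two constant levels $G$ in (\ref{eq:first-single-two-body}) and (\ref{eq:second-single-two-body}) are different numbers (they coincide precisely when $m=2R^3$), while $F$ is the same function in both cases, so the arguments $\alpha_1,\alpha_2$ of Case 1 and $\alpha_3,\alpha_4$ of Case 2 are four distinct points and the eight classes are pairwise distinct.

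The gap is in the degenerate count, and it is exactly the distinctness check you flagged as the main obstacle that exposes it. At the tangential mass ratio $m=2R^3$ the two families collapse onto the \emph{same} pair of configurations: with $\alpha_{\rm tan}=(\sqrt{2}-1)R$ one computes
\[
\frac{R(\alpha_{\rm tan}-R)}{\alpha_{\rm tan}+R}=-(\sqrt{2}-1)R=-\alpha_{\rm tan},
\qquad
\frac{R(R+\alpha_{\rm tan})}{R-\alpha_{\rm tan}}=(\sqrt{2}+1)R=\frac{R^2}{\alpha_{\rm tan}},
\]
so the Case-2 tangential solutions are identical to the Case-1 ones. Consequently your tally, which counts distinct solutions, yields ten classes split as eight non-degenerate plus \emph{two} degenerate, not the ``four'' degenerate asserted in the statement; your proposal silently proves a different split from the one claimed. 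The ``four'' in the corollary can only be reached by counting the tangential pair once for each family it limits from --- this double counting is visible in the paper, where the same two degenerate solutions are announced both at the end of item {\bf 1} and after item {\bf 2} of the proof of Theorem \ref{theo:new-equilibria-main} --- but under that convention the total would be twelve, not ten, so the statement is not arithmetically self-consistent in the first place. A complete review of this corollary must confront that choice explicitly: either adopt the per-family convention (getting $8+4$ but a total of twelve), or count distinct solutions (getting a total of ten but $8+2$); as written, your proof establishes the latter while the statement asserts the former, and passing over the coincidence computed above leaves that contradiction unaddressed.
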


The Theorem \ref{theo:new-equilibria-main} proves the existence of new orbits on different hemispheres and shows a rich dynamics in this problem. In other words, by using the  M\"{o}bius geometry we  show new   M\"{o}bius  elliptic solutions for the $n$--body problem in $\mathbb{M}_R^2$. We have the following important result.

\begin{Corollary} There are  M\"{o}bius elliptic solutions for the $n$-body problem in $\mathbb{M}_R^2$ invariant under the Killing vector fields
(\ref{eq:first-vector-field-1}) and (\ref{eq:third-vector-field-1}). 
\end{Corollary}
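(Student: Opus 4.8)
The plan is to deduce the existence of solutions invariant under the first and third Killing vector fields from the existence of solutions invariant under the second one, exploiting the conjugacy already established in Lemma \ref{lema:principal} and anticipated in Remark \ref{unique}. First I would record that solutions invariant under (\ref{eq:second-vector-field-1}) do exist: Theorem \ref{theo:new-equilibria-main} exhibits explicit two-body examples (which are in particular $n$-body solutions), and more generally Corollary \ref{coro:cyclical-2} together with Theorem \ref{theo:relative-equilibria} characterizes the initial data producing such orbits. I then fix one such solution $\mathbf{z}(t)=(z_1(t),\dots,z_n(t))$ of (\ref{eq:motiongral}) invariant under the flow of $\dot z = 2iz$.

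Next I would transport this solution by the isometry constructed in the proof of Lemma \ref{lema:principal}. Let $A\in SU(2)$ be the rotation matrix (\ref{eq:matrix-euler}) whose associated M\"{o}bius transformation $f_A$ carries the field $\dot z = 2iz$ into $\dot w = 1+w^2$, and set $\mathbf{w}(t)=(f_A(z_1(t)),\dots,f_A(z_n(t)))$. The key step is to verify that $\mathbf{w}(t)$ is again a solution of the equations of motion (\ref{eq:motiongral}). This follows because $f_A$, being an element of $SU(2)/\{\pm I\}$, is a proper isometry of $\mathbb{M}_R^2$: it preserves all pairwise geodesic distances $d_{kj}$, hence leaves the cotangent potential $U_R$ of (\ref{eq:potesf}) invariant, and it maps geodesics to geodesics, so it intertwines the covariant-derivative operator appearing on the left of (\ref{eq:motiongral}). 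Consequently $f_A$ sends solutions to solutions, and since by construction it conjugates the flow of $\dot z=2iz$ to the flow of $\dot w = 1+w^2$, the transported orbit $\mathbf{w}(t)$ is a M\"{o}bius elliptic solution invariant under (\ref{eq:first-vector-field-1}). Running the same argument with the isometry conjugating (\ref{eq:third-vector-field-1}) to (\ref{eq:second-vector-field-1}) --- the ``other case'' left to the reader in Lemma \ref{lema:principal} --- yields a solution invariant under (\ref{eq:third-vector-field-1}).

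The one point deserving care, and the only genuine obstacle, is the equivariance just invoked: that applying $f_A$ pointwise to a solution of (\ref{eq:motiongral}) again produces a solution. I would make this precise by checking the two sides of (\ref{eq:motiongral}) separately. For the right-hand side it suffices that $U_R$ depends on the configuration only through the geodesic distances $d_{kj}$, which are $SU(2)/\{\pm I\}$-invariants of the metric (\ref{met-c}); for the left-hand side one uses that an isometry commutes with the Levi-Civita connection, so the geodesic-acceleration term transforms tensorially and the equation is carried to itself. Granting this standard equivariance --- essentially the statement that the $n$-body problem on $\mathbb{M}_R^2$ is invariant under its isometry group --- the conjugacy of the Killing fields from Lemma \ref{lema:principal} transfers invariance directly, and the existence of solutions invariant under (\ref{eq:first-vector-field-1}) and (\ref{eq:third-vector-field-1}) becomes immediate.
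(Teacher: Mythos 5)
Your proposal is correct and follows essentially the same route the paper intends: the paper gives no explicit proof of this Corollary, but Remark \ref{unique} states precisely that existence is to be obtained by carrying the solutions of Theorem \ref{theo:new-equilibria-main} (invariant under (\ref{eq:second-vector-field-1})) isometrically to the fields (\ref{eq:first-vector-field-1}) and (\ref{eq:third-vector-field-1}) via the conjugating element of ${\rm SU}(2)$ constructed in Lemma \ref{lema:principal}. Your explicit verification that isometries send solutions of (\ref{eq:motiongral}) to solutions --- invariance of the cotangent potential under distance-preserving maps plus compatibility with the Levi-Civita connection --- is exactly the equivariance step the paper takes for granted.
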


%%%%%%%%%%%%%%%%%%%%%%%%%%%%%%%%%%%%%%
\subsubsection{\bf M\"{o}bius elliptic solutions for the Eulerian three-body problem in $\mathbb{M}_R^2$}
%%%%%%%%%%%%%%%%%%%%%%%%%%%%%%%%%%%%%

In \cite{Perez} the authors found the so called elliptic Eulerian solutions for the initial positions
$z_{1}(0)=\alpha, z_{2}(0)=0, z_{3}(0)=\beta$
with $\alpha$,$\beta$ real numbers. They suppose that $z_{1}$ and  $z_{3}$ have mass $m$, and $z_{2}$ has mass $M$.

The algebraic system of equations (\ref{eq:rationalsystem-4}) becomes for this case,
\begin{eqnarray}\label{elliptic-eulerian-trhee-body}
\dfrac{32R^{6}(\alpha ^{2}-R^{2})\alpha}{(R^{2}+\alpha^{2})^{4}} &=& \dfrac{-M\alpha}{\vert \alpha \vert ^{3}} + \dfrac{m(\beta ^{2}+R^{2})^{2}(R^{2}+\beta \alpha)(\beta-\alpha)}{\vert \beta - \alpha \vert ^{3}\vert R^{2}+\beta \alpha \vert ^{3}}, \nonumber \\
0 &=& \dfrac{\alpha (\alpha^{2}+R^{2})^{2}}{\vert \alpha \vert^{3}}+\dfrac{\beta(\beta ^{2}+R^{2})^{2}}{\vert \beta \vert^{3}}, \nonumber \\
\dfrac{32R^{6}(\beta ^{2}-R^{2})\beta}{(R^{2}+\beta^{2})^{4}} &=& \dfrac{-M\beta}{\vert \beta \vert ^{3}} + \dfrac{m(\alpha ^{2}+R^{2})^{2}(R^{2}+\alpha \beta)(\alpha-\beta)}{\vert \alpha - \beta \vert ^{3}\vert R^{2}+\alpha \beta \vert ^{3}}. \nonumber \\
\end{eqnarray}

From the second equation in the system (\ref{elliptic-eulerian-trhee-body}) is necessary that $\alpha\beta < 0$, and without lose of generality we can suppose that $\beta <0 <\alpha$. Therefore such equation become into the one,
\begin{equation}
\beta \alpha ^{2}+(R^{2}+\beta ^{2})\alpha +\beta R^{2}=0,
\end{equation}
which has the solutions,
\begin{eqnarray}
\alpha = -\beta \quad \mbox{\rm for}  \quad \vert \beta \vert < R , \nonumber \\
\alpha = \dfrac{-R^{2}}{\beta} \quad \mbox{\rm for} \quad \vert \beta \vert > R.  \nonumber \\
\end{eqnarray}

Since the second equality is a geodesic conjugated algebraic solution for $\alpha$, then unique solution is the antisymmetric $\alpha = -\beta$ inside the geodesic
circle $|z|=R$. 

We have the following result on this M\"{o}bius configuration of the Eulerian three body problem.

\begin{Corollary}{\bf (to Theorem 5.1 of \cite{Perez})}\label{cor:eulerian-three-body} For the fixed mass with mass ratio $\displaystyle 4R^3  > \frac{M}{2}+ \frac{m}{4}$ there are only four distinct possible class of  non degenerate solutions for the Eulerian M\"{o}bius three-body problem in $\mathbb{M}_R^2$ with one particle  of mass $M$ sited in the origin of coordinates and the other two of mass $m$ sited in opposite sides of the same circle. There is one of them in each region $ \Omega_i$ of $\mathbb{M}_R^2$. For the case $\displaystyle 4R^3  = \frac{M}{2}+ \frac{m}{4}$ are only two distinct degenerated solutions, one along $T_S$ and the other along $T_N$. In the case $\displaystyle  4R^3  < \frac{M}{2}+ \frac{m}{4}$ such configuration does not exist.
\end{Corollary}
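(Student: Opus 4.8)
The plan is to follow the template of Theorem~\ref{theo:new-equilibria-main}: reduce the configuration to a single real equation in the one surviving parameter $\alpha$, and then count its roots as the mass pair $(M,m)$ varies. First I would record the reduction already carried out in the text. The second equation of the system (\ref{elliptic-eulerian-trhee-body}) forces $\alpha\beta<0$, and its only genuinely new solution is the antisymmetric configuration $\beta=-\alpha$ with $0<\alpha<R$, the branch $\alpha=-R^{2}/\beta$ being a geodesic-conjugate solution and hence not independent. Substituting $\beta=-\alpha$ into the first equation of (\ref{elliptic-eulerian-trhee-body}) and simplifying with $|R^{2}+\beta\alpha|=R^{2}-\alpha^{2}$, $|\beta-\alpha|=2\alpha$ and $|\beta|=\alpha$ collapses the three equations to the single scalar relation
\begin{equation}\label{eq:eulerian-reduced}
\frac{32R^{6}(R^{2}-\alpha^{2})\alpha}{(R^{2}+\alpha^{2})^{4}}=\frac{M}{\alpha^{2}}+\frac{m(R^{2}+\alpha^{2})^{2}}{4\alpha^{2}(R^{2}-\alpha^{2})^{2}},
\end{equation}
valid on $(0,R)$; by the $\alpha\leftrightarrow-\alpha$ symmetry of the equal masses the third equation reproduces (\ref{eq:eulerian-reduced}), so this is the sole condition to solve.

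The next step is the qualitative study of (\ref{eq:eulerian-reduced}). Clearing the common factor $\alpha^{-2}$ and writing it as $\Phi(\alpha)=\Psi(\alpha)$, the left member $\Phi(\alpha)=32R^{6}(R^{2}-\alpha^{2})\alpha^{3}(R^{2}+\alpha^{2})^{-4}$ is a unimodal ``bump'' that vanishes at both endpoints of $[0,R]$, whereas the right member $\Psi(\alpha)=M+\tfrac{m}{4}(R^{2}+\alpha^{2})^{2}(R^{2}-\alpha^{2})^{-2}$ increases strictly from the finite value $M+\tfrac{m}{4}$ at $\alpha=0$ to $+\infty$ as $\alpha\to R^{-}$. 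Since $\Phi<\Psi$ near both ends, the count of roots in $(0,R)$ is governed by whether the bump $\Phi$ rises above the monotone curve $\Psi$: when it does there are exactly two transversal crossings $\alpha_{-}<\alpha_{+}$, when it is tangent there is a single double root, and when it stays below there is none. These three regimes are separated by the relation between $M$ and $m$ recorded in the statement, obtained by imposing tangency of $\Phi$ and $\Psi$; at that threshold the double root sits on the tropic, placing the degenerate configurations on $T_S$ and $T_N$. In the non-degenerate regime the interior roots lie one in $\Omega_1$ and one in $\Omega_2$, and their geodesic conjugates $R^{2}/\alpha_{\pm}$ supply the two remaining solutions, one in $\Omega_3$ and one in $\Omega_4$, giving the four classes claimed, one per region.

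The main obstacle is precisely that, unlike the two-body equations (\ref{eq:first-single-two-body}) and (\ref{eq:second-single-two-body}), the central mass $M$ destroys the perfect-cube structure that previously let one extract the clean geometric factor $(R^{2}-\alpha^{2})\alpha(R^{2}+\alpha^{2})^{-2}$ with its unique maximum $1/(4R)$ at the tropic. Here the $M$ and $m$ contributions carry different $\alpha$-dependence, so (\ref{eq:eulerian-reduced}) cannot be rewritten as ``$F(\alpha)=\mathrm{const}$''; instead I must prove directly that $\Phi-\Psi$ is genuinely unimodal on $(0,R)$, equivalently that the bump and the monotone curve meet at most twice. This reduces to a sign analysis of $(\Phi-\Psi)'=\Phi'-\Psi'$: $\Phi'$ changes sign exactly once while $\Psi'>0$ throughout, but one must still exclude spurious extra crossings arising from the singular growth of $\Psi$ as $\alpha\to R^{-}$. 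Once unimodality and the transversality of the two interior crossings are established—the latter being exactly the non-degeneracy in the sense of \cite{Florin2} obtained under the transversality conditions of \cite{Guille}—the tally of four non-degenerate solutions below threshold, two degenerate ones at threshold, and none above it follows as in Theorem~\ref{theo:new-equilibria-main} and in Theorem~5.1 of \cite{Perez}, reducing the corollary to this one-variable monotonicity argument.
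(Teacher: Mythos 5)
Your reduction via $\beta=-\alpha$ is correct and coincides with the paper's first step, but the analysis you build on it has a genuine gap, and it originates in the claim you make about the ``main obstacle.'' You assert that the central mass $M$ destroys the perfect-cube structure and that the reduced equation cannot be brought to the two-body form; in fact the paper does exactly that. Multiplying your equation (after clearing $\alpha^{-2}$) by the positive factor $(R^{2}-\alpha^{2})^{2}/(R^{2}+\alpha^{2})^{2}$ yields the paper's equation (\ref{eq:elliptic-eulerian-single}): the left member becomes the perfect cube $F(\alpha)=32\left(\alpha R^{2}(R^{2}-\alpha^{2})/(R^{2}+\alpha^{2})^{2}\right)^{3}$, the cube of precisely the two-body factor, with its unique interior maximum at the tropic $(\sqrt{2}-1)R$, while the right member becomes $G(\alpha)=M\left((R^{2}-\alpha^{2})/(R^{2}+\alpha^{2})\right)^{2}+\frac{m}{4}$, which is monotone decreasing and equals $\frac{M}{2}+\frac{m}{4}$ at the tropic because $\left((R^{2}-\alpha^{2})/(R^{2}+\alpha^{2})\right)^{2}=\frac{1}{2}$ there. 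The only change from the two-body case is that the right-hand member is decreasing rather than constant; the geometric factor survives intact. With this normalization the statement's hypothesis is literally the inequality $F>G$ evaluated at the tropic, and since $F-G<0$ at both $\alpha=0$ and $\alpha=R$, the intermediate value theorem gives one root on each side of the tropic; together with their geodesic conjugates this produces the four classes, one per $\Omega_{i}$, and ties the threshold and the degenerate case to $T_{S}$ and $T_{N}$. That is the paper's proof.

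By refusing this normalization you lose every one of those conclusions. Your bump $\Phi(\alpha)=32R^{6}\alpha^{3}(R^{2}-\alpha^{2})/(R^{2}+\alpha^{2})^{-4}$ (sign conventions aside) attains its maximum at $\alpha=R/\sqrt{3}$, not at the tropic, and your comparison curve $\Psi$ is increasing; consequently nothing in your setup singles out $T_{S}$ or $T_{N}$, nothing shows that the two interior roots straddle the tropic (which you need for the claim ``one in $\Omega_{1}$, one in $\Omega_{2}$''), and nothing identifies your tangency threshold with the explicit mass relation $4R^{3}=\frac{M}{2}+\frac{m}{4}$. You assert all three; to prove the third you would have to solve the two-equation tangency system $\Phi=\Psi$, $\Phi'=\Psi'$ in $(\alpha,M,m)$ and show it reduces to the stated relation, which you do not attempt (and which does not come out in closed form in your variables). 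So the proposal, as written, does not reach the statement's conclusions: the missing idea is the specific algebraic regrouping that puts the peak of the left-hand side at the tropic and the value $\frac{M}{2}+\frac{m}{4}$ of the right-hand side at that same point, after which the counting is an evaluation plus the intermediate value theorem rather than an unproved tangency classification.
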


\begin{proof}If we substitute $\alpha = -\beta$ both in the first as and the third equations of system (\ref{elliptic-eulerian-trhee-body}), we obtain the same following equation
\begin{equation}\label{eq:elliptic-eulerian-single}
32\left(\dfrac{\alpha R^{2}(R^{2}-\alpha^{2})}{(R^{2}+\alpha^{2})^{2}}\right)^{3}= M\left(\dfrac{R^{2}-\alpha ^{2}}{R^{2}+\alpha ^{2}}\right)^{2} + \dfrac{m}{4}.
\end{equation}

If $\alpha$ is a solution of the algebraic equation (\ref{eq:elliptic-eulerian-single}), a simple substitution proves that the geodesic conjugated real number $\displaystyle  - \frac{R^2}{\alpha} < -R$ also is a solution of such that equation, and such number is outside of the geodesic circle $z=R$.

Therefore it will be sufficient with considering the problem for $0 < \alpha < R$, and for this we consider the smooth real functions
\begin{eqnarray}\label{eq:elliptic-eulerian-functions}
F(\alpha) &=& 32\left(\dfrac{\alpha R^{2}(R^{2}-\alpha^{2})}{(R^{2}+\alpha^{2})^{2}}\right)^{3}, \nonumber \\
G(\alpha) &=& M\left(\dfrac{R^{2}-\alpha ^{2}}{R^{2}+\alpha ^{2}}\right)^{2} + \dfrac{m}{4}, \nonumber \\
\end{eqnarray}
in the interval $[0, R]$.

For the first function $F$ in the pair (\ref{eq:elliptic-eulerian-functions}), we have that it is increasing in the interval
$\displaystyle [0, (\sqrt{2} -1)R)$ and decreasing in  the interval $((\sqrt{2} -1)R, R]$, having a maximum value 
$F((\sqrt{2} -1)R)= 4R^3$. 

On the other hand, the second function $G$ is strictly decreasing in the interval $[0, R]$ and has the value   $\displaystyle G((\sqrt{2} -1)R)= \frac{M}{2}+ \frac{m}{4}$.

Therefore, if we define the function $H(\alpha)= F(\alpha)-G(\alpha)$, then 
\begin{eqnarray}\label{eq:elliptic-eulerian-functions-valuated}
H(0) &=& F(0)-G(0) = - \left(\frac{M}{2}+ \frac{m}{4} \right) < 0,  \nonumber \\
H((\sqrt{2} -1)R) & =& F((\sqrt{2} -1)R)-G((\sqrt{2} -1)R) \nonumber \\
 &=& 4R^3- \left(\frac{M}{2}+ \frac{m}{4} \right) >0, \nonumber \\
\end{eqnarray}
by hypothesis. 
 
 It follows  that there exists $\alpha_1 \in (0, (\sqrt{2} -1)R)$ such  $H(\alpha_1)=0$, and this is a solution for (\ref{eq:elliptic-eulerian-single}). 

Moreover, $F(R)=0$ and $\displaystyle G(R)=\frac{m}{4}$, and then $\displaystyle H(R)=-\frac{m}{4} <0$, which implies that there exists 
$\alpha_2 \in ((\sqrt{2} -1)R, R)$ such that $H(\alpha_2)=0$, this is, $\alpha_2$ is also solution of (\ref{eq:elliptic-eulerian-single}).

The transversal intersection of the smooth functions $F$ and $G$ on the interval $[0,R]$ for this case shows the uniqueness of such solutions, which determine the positions for the required solutions. 

If the intersection is not transversal then we obtain the single point $\alpha= (\sqrt{2} -1)R$ where the functions $F$ and $G$ are tangential, which determine the position of the particle for obtaining the unique degenerated solution inside the geodesic circle.

The last claim is obvious and this ends the proof.
\end{proof}

%%%%%%%%%%%%%%%%%%%%%%%%%%%%%%%%%%%%%%
\subsubsection{\bf M\"{o}bius  elliptic solutions in the four-body problem  in $\mathbb{M}_R^2$}
%%%%%%%%%%%%%%%%%%%%%%%%%%%%%%%%%%%%%

We give an example for the the Four-body problem in $\mathbb{M}_R^2$, by considering four particles all of equal masses $m$, which are sited in the early on the ellipse
$\displaystyle \frac{x^{2}}{\alpha^{2}}+\frac{y^{2}}{\beta^{2}}=1$, with the initial conditions
$z_{1}(0)=\alpha, z_{2}(0)=-\alpha, z_{3}(0)=i\beta, z_{4}(0)=-i\beta$, and with the condition $ 0<\beta \leq \alpha $.

The algebraic system of equations (\ref{eq:rationalsystem-4}), after reducing, are for this case,
\begin{eqnarray}\label{eq:elliptic-four-body}
\dfrac{32R^{6}(R^{2}-\alpha ^{2})\alpha}{(R^{2}+\alpha ^{2})^{4}} &=& \dfrac{m(R^{2}+\alpha ^{2})^{2}(R^{2}-\alpha ^{2})}{4\alpha ^{2}\vert R^{2}-\alpha ^{2} \vert ^{3}} + \dfrac{2\alpha m (R^{2}+\beta ^{2})^{2}(R^{2}-\beta ^{2})}{\vert \alpha +i\beta \vert ^{3} \vert R^{2}+ i\alpha \beta \vert ^{3}}, \nonumber \\
\dfrac{32R^{6}(R^{2}-\beta ^{2})\beta}{(R^{2}+\beta ^{2})^{4}} &=& \dfrac{m(R^{2}+\beta ^{2})^{2}(R^{2}-\beta ^{2})}{4\beta ^{2}\vert R^{2}-\beta ^{2} \vert ^{3}} + \dfrac{2\beta m (R^{2}+\alpha ^{2})^{2}(R^{2}-\alpha ^{2})}{\vert \alpha +i\beta \vert ^{3} \vert R^{2}+ i\alpha \beta \vert ^{3}}. \nonumber \\
\end{eqnarray}

It is clear that $\alpha=\beta$ satisfies the pair of equations (\ref{eq:elliptic-four-body}), which implies the following result 
(see \cite{Florin} and \cite{Diacu1}).

\begin{Corollary}{\bf (to Theorem 1 of \cite{Diacu1})} For the four-body problem with equal masses on the same circle and with the configuration of square, up a geodesic conjugation, there are only six class of solutions. Four are  non degenerate with two inside the geodesic circle and two outside. The other two are degenerate, one is sited inside the geodesic circle and the other is outside.  
\end{Corollary}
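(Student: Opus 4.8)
The plan is to mimic the argument that established Corollary \ref{cor:eulerian-three-body}: restrict to the symmetric (square) configuration, reduce the system to a single scalar equation, and then count its roots by a monotonicity analysis, finally doubling the count through geodesic conjugation.

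First I would impose the square by setting $\alpha=\beta$, which has already been observed to solve (\ref{eq:elliptic-four-body}); the four bodies then sit at $\alpha,-\alpha,i\alpha,-i\alpha$ on the common circle $|z|=\alpha$. By the fourfold symmetry of this configuration the two equations of (\ref{eq:elliptic-four-body}) coincide, so the problem collapses to one scalar equation in $\alpha$. Evaluating $|\alpha+i\alpha|=\sqrt{2}\,\alpha$ and $|R^2+i\alpha^2|=\sqrt{R^4+\alpha^4}$ and clearing denominators, I would solve for the mass and write the equation in the separated form $m=\phi(\alpha)$ on the open interval $(0,R)$, with $\phi$ built from the purely geometric data.

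Next I would study $\phi$ on $(0,R)$. Since the geometric left-hand term vanishes at both $\alpha=0$ and $\alpha=R$ while the coefficient of $m$ blows up at those endpoints, $\phi$ extends continuously by $\phi(0)=\phi(R)=0$ and is strictly positive in between; I expect it to have a single interior maximum, located at the southern tropic value $\alpha=(\sqrt{2}-1)R$. Granting this, the number of roots of $m=\phi(\alpha)$ is governed by the comparison of $m$ with $\phi\big((\sqrt{2}-1)R\big)$: when $m$ is below this threshold the horizontal line meets the graph transversally at exactly two points, one in $\Omega_1$ and one in $\Omega_2$, giving two non-degenerate solutions inside $E_R$; at the threshold the line is tangent at $\alpha=(\sqrt{2}-1)R$, giving a single degenerate solution along $T_S$; above the threshold there is no solution.

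Finally I would invoke geodesic conjugation, as in Lemma \ref{lema:new-equilibria} and Corollary \ref{cor:eulerian-three-body}: each interior root $\alpha\in(0,R)$ produces through $-R^2/\alpha$ a matching root outside $E_R$, sending the $\Omega_1$ and $\Omega_2$ roots to solutions in $\Omega_4$ and $\Omega_3$ and the tangency point $T_S$ to $T_N$. Thus the two non-degenerate interior solutions pair with two non-degenerate exterior ones (one in each region $\Omega_i$) and the interior degenerate solution pairs with an exterior degenerate one, for six classes in all. I expect the real obstacle to be the middle step: carrying out the algebraic reduction to the clean form $m=\phi(\alpha)$ and, above all, verifying that the extra $\sqrt{2}$ and $(R^4+\alpha^4)^{3/2}$ factors introduced by the square geometry still place the unique maximum of $\phi$ exactly at $\alpha=(\sqrt{2}-1)R$, so that the same threshold dichotomy as in the two- and three-body cases applies.
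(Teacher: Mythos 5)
Your overall scheme---impose $\alpha=\beta$, reduce (\ref{eq:elliptic-four-body}) to one scalar equation, isolate the mass as $m=\phi(\alpha)$ on $(0,R)$, count roots by unimodality, then double the count by geodesic conjugation---is essentially the paper's argument (the paper keeps $m$ inside one of the two compared functions $F$, $G$ rather than isolating it, but that is cosmetic). The genuine gap is exactly the step you flagged as the ``real obstacle'': the maximum of $\phi$ is \emph{not} at the southern tropic $\alpha=(\sqrt{2}-1)R$, and the four-body equation does not inherit the two- and three-body critical structure. From (\ref{eq:elliptic-four-body-single}),
\[
\frac{32R^{6}\alpha^{3}}{m(R^{2}+\alpha^{2})^{6}}
=\frac{1}{4\,\vert R^{2}-\alpha^{2}\vert^{3}}
+\frac{1}{\sqrt{2\,(\alpha^{4}+R^{4})^{3}}},
\]
the function carrying $m$ has its critical point at $\alpha=\sqrt{3}R/3$ (not $(\sqrt{2}-1)R$), and the right-hand side blows up as $\alpha\to R^{-}$ instead of vanishing. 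If one does isolate $m=\phi(\alpha)$, a direct check at $R=1$ gives $\phi\bigl((\sqrt{2}-1)R\bigr)\approx 0.79$ while $\phi(R/2)\approx 0.85$, so the maximum sits near $0.49R$, strictly above the tropic. Consequently your threshold dichotomy is wrong as stated: for masses with $\phi\bigl((\sqrt{2}-1)R\bigr)<m<\max_{(0,R)}\phi$ your criterion predicts no solutions although two exist; and the finer localization you assert (one root in $\Omega_{1}$ and one in $\Omega_{2}$, degenerate orbit on $T_{S}$, conjugates in $\Omega_{4}$, $\Omega_{3}$, $T_{N}$) is false in general---both roots can lie in $\Omega_{2}$.

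The paper avoids this trap precisely because it never computes the tangency point. It shows $H=F-G$ changes sign on $\bigl(0,\sqrt{3}R/3\bigr)$ under the mass condition $R^{3}>\frac{128m}{81\sqrt{3}}\bigl(\frac{1}{\sqrt{2}}+\frac{1}{4}\bigr)$, obtains the second root from $G(\alpha)\to+\infty$ as $\alpha\to R^{-}$ versus $F$ bounded, gets uniqueness from transversality, and then establishes the existence and uniqueness of a tangential parameter $\alpha_{\rm tan}=\alpha(R,m)$ abstractly, from concavity and an inflection-point count, before splitting into the three mass cases $F(\alpha_{\rm tan})-G(\alpha_{\rm tan})>0$, $=0$, $<0$. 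To repair your proof, keep the separation $m=\phi(\alpha)$ but prove the unimodality of $\phi$ on $(0,R)$ directly (or reproduce the paper's concavity/inflection argument), define the threshold as $\max_{(0,R)}\phi$ rather than $\phi\bigl((\sqrt{2}-1)R\bigr)$, and drop all tropic-based region claims; the corollary only asserts the inside/outside count, and that part does follow from geodesic conjugation exactly as you describe.
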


\begin{proof} 

The proof follows the same methodology as in Theorem \ref{theo:new-equilibria-main} and Corollary \ref{cor:eulerian-three-body}.

If we put $\beta=\alpha$ in any of the equations  (\ref{eq:elliptic-four-body}), we obtain, after reducing it, 
\begin{equation}\label{eq:elliptic-four-body-single}
\dfrac{32R^{6} \alpha^3}{m(R^{2}+\alpha ^{2})^6} = \frac{1}{ 4\vert R^{2}-\alpha ^{2} \vert ^{3}} + 
\dfrac{1}{\sqrt{2 \, (\alpha^{4} + R^{4})^3}}
\end{equation}

If $\alpha$ is a solution of the algebraic equation (\ref{eq:elliptic-four-body-single}), another  simple substitution proves that the geodesic conjugated real number $\displaystyle - \frac{R^2}{\alpha} < -R$  is also a solution of such that equation, and such number is outside of the geodesic circle 
$z=R$.

Therefore it will be sufficient with considering the problem for $0 < \alpha < R$, and for this we consider the smooth real functions
\begin{eqnarray}\label{eq:elliptic-four-body-functions}
F(\alpha) &=& \dfrac{32R^{6} \alpha^3}{m(R^{2}+\alpha ^{2})^6}, \nonumber \\
G(\alpha) &=& \frac{1}{ 4 (R^{2}-\alpha ^{2})^{3}} + 
\dfrac{1}{\sqrt{2 \, (\alpha^{4} + R^{4})^3}}, \nonumber \\
\end{eqnarray}
in the interval $[0, R)$.

For the first function $F$ in the pair (\ref{eq:elliptic-four-body-functions}), we have that it is increasing in the interval
$\displaystyle \left[0, \frac{\sqrt{3} R}{3}\right)$ and decreasing in  the interval $\displaystyle \left(\frac{\sqrt{3} R}{3}, R\right]$, having a maximum value 
$\displaystyle F\left(\frac{\sqrt{3} R}{3}\right)= \frac{81 \sqrt{3}}{128 m R^3}$. 

On the other hand, the second function  $G$ is also increasing in such interval with a singular point in $\alpha = R$  and  has the value   $\displaystyle G\left(\frac{3 R}{\sqrt{3}}\right)= \frac{27}{4 R^6} \left( \frac{1}{8} + \frac{1}{5 \sqrt{5}}  \right)$.

Therefore, if we define again the function $H(\alpha)= F(\alpha)-G(\alpha)$, then 
\begin{eqnarray}\label{eq:elliptic-eulerian-functions-valuated}
H(0) &=& F(0)-G(0) = - \left(\frac{1}{\sqrt{2}} + \frac{1}{4} \right)\frac{1}{R^6} < 0,  \nonumber \\
H\left(\frac{\sqrt{3} R}{3}\right) & =& F\left(\frac{\sqrt{3} R}{3}\right)-G\left(\frac{\sqrt{3} R}{3}\right) \nonumber \\
&=& \frac{81 \sqrt{3}}{128 m R^3}- \frac{27}{4 R^6} \left( \frac{1}{8} + \frac{1}{5 \sqrt{5}} \right)>0, \nonumber \\
\end{eqnarray}
and for the necessary mass  condition $\displaystyle R^3 > \frac{128 m}{81 \sqrt{3}} \left( \frac{1}{\sqrt{2}} + \frac{1}{4}\right)$ there exists $\displaystyle \alpha_1 \in \left(0, \frac{\sqrt{3} R}{3}\right)$ such  $H(\alpha_1)=0$, and this is, as before,  a solution for (\ref{eq:elliptic-four-body-single}). 

On the other hand, since  $F(\alpha)$ is strictly increasing in the interval $\displaystyle \left(\frac{\sqrt{3} R}{3}, R\right)$, $G(\alpha)$ is strictly decreasing in such interval and 
\begin{eqnarray}
\lim_{\alpha \to R^{-}} F(\alpha) &=& \frac{1}{2m R^3}, \nonumber \\
\lim_{\alpha \to R^{-}} G(\alpha) &=& + \infty, \nonumber \\
\end{eqnarray}
then there exists $\displaystyle \theta \in \left(\frac{\sqrt{3} R}{3}, R\right)$ such that $H(\theta)<0$. Therefore there exists 
$\displaystyle \alpha_2 \in \left(\frac{\sqrt{3} R}{3}, \theta \right)$ such $H(\alpha_2)=0$. This is, $\alpha_2$ is a second solution of (\ref{eq:elliptic-four-body-single}).

Once again, as before, the transversality of functions $F$ and $G$ on the interval $[0,R)$ shows the uniqueness of such solutions in this interval, which proves that for the necessary mass condition $\displaystyle R^3 > \frac{128 m}{81 \sqrt{3}} \left( \frac{1}{\sqrt{2}} + \frac{1}{4}\right)$  there are two solutions for the equation (\ref{eq:elliptic-four-body-single}) in the interval $(0,R)$.

On the other hand, for the mass condition $\displaystyle \left(\frac{1}{\sqrt{2}} + \frac{1}{4} \right)m > \frac{81 \sqrt{3}  R^3}{128 }$, there are not intersection between the functions $F$ and $G$.

Therefore, from continuity of the problem respect to parameters, there is  at least one mass condition such that the intersection between $F$ and $G$ is in a single point.

In order of finding  this sufficient condition for the having elliptic M\"{o}bius  solutions for this problem, it is necessary to find the tangential argument $\displaystyle \alpha \in \left(0, \frac{\sqrt{3}R}{3} \right)$ such that 
\begin{eqnarray}
F(\alpha)-G(\alpha) &=&  0, \nonumber \\
F'(\alpha)-G'(\alpha) &=&  0, \nonumber \\
\end{eqnarray}
in terms of the parameters $R,m$, say $\displaystyle \alpha_{\rm tan} = \alpha(R,m)$. The existence of this tangential point $\alpha_{\rm tan} $ follows  from the concavities of the functions $F(\alpha)$ and $G(\alpha)$. The uniqueness follows from the fact that the function $F(\alpha)$ has only one inflection point in the interval $\left(0, \frac{\sqrt{3}R}{3} \right)$, and the existence of more tangential points must imply a bigger number of inflection points, as a straightforward  calculus shows.

Therefore, we obtain the following cases.
\begin{enumerate}
\item For the mass ratio $F(\alpha_{\rm tan})-G(\alpha_{\rm tan}) >0$ there exist two solutions $\alpha_1$, $\alpha_2$ of equation (\ref{eq:elliptic-four-body-single}) in the interval $(0, R)$ such that $0 < \alpha_1 < \alpha_{\rm tan} <\alpha_2$. Such solutions are the  initial conditions for two non degenerate solutions of this four-body problem. 
\item For the mass ratio $F(\alpha_{\rm tan})-G(\alpha_{\rm tan})  = 0$ there exist a unique solution $\alpha = \alpha_{\rm tan}$ of equation (\ref{eq:elliptic-four-body-single}) in the interval $(0, R)$. Such solution is  the  initial conditions for one degenerate solution of this four-body problem. 
\item For the mass ratio $F(\alpha_{\rm tan})-G(\alpha_{\rm tan})  <0$ there exist are not solution for this four-body problem. 
\end{enumerate}

This ends the proof.
\end{proof}

%%%%%%%%%%%%%%%%%%%%%%%%%%%%%%%%%%%%%%%%%%%%%%%%%%%%%%%%%%%%%
\section{M\"{o}bius hyperbolic solutions}\label{hyperbolic}
%%%%%%%%%%%%%%%%%%%%%%%%%%%%%%%%%%%%%%%%%%%%%%%%%%%%%%%%%%%%

 We show  the  M\"{o}bius solutions generated by the
one dimensional hyperbolic subgroup of ${\rm \bf  Mob}_2 \, (\mathbb{M}_R^2)$
in the Iwasawa decomposition (\ref{eq:Iwasawa}).

\smallskip

 In one Riemannian manifold, a
pair of points $p_1$ and $p_2$ are geodesic conjugated if there exists at least a pair
of different geodesics joining  them. It is well known that in
$\mathbb{M}^2_R$ any pair of antipodal points $\displaystyle z_k=
\frac{-R^2}{|z_j|^2} \, z_j $ are geodesic conjugated and the whole space is
foliated by all geodesic curves passing through such pair of
points. The set of all such curves is called the {\it geodesic
conjugated class foliation}.

\begin{Definition} A solution $\mathbf{z}(t)=(z_1(t), z_2(t), \cdots, z_n(t))$ of equation (\ref{eq:motiongral})
is called  homothetic if all the particles move on curves whose path
belong to the same geodesic conjugated class  foliation.
\end{Definition}

Proceding  as in Lemma \ref{lema:principal}, up an isometry,   we can assume that one point is
the origin of coordinates $z=0$ with conjugated point $z=\infty$,
and the geodesic foliation of $\mathbb{M}^n_R$ is the
 set of straight lines passing through such point called {\it meridians}.

We remark that since the geodesics are always parametrized such that their tangent vectors
have constant speed (see \cite{DoCarmo}), then one particle moving along one homothetic solution not necessarily
does it in a geodesic way.

Among the whole set of homothetic solutions of
(\ref{eq:motiongral}) is the subclass generated by the
action of the  one-parametric subgroup of hyperbolic  M\"{o}bius
transformations  (\ref{eq:Hyperbolic}).

\begin{Definition} An homothetic solution $\mathbf{z}(t)=(z_1(t), z_2(t), \cdots, z_n(t))$ of equation (\ref{eq:motiongral})
is called {\it M\"{o}bius hyperbolic} if it is invariant under the vector field (\ref{eq:hyperbolic-vector-field}).
\end{Definition}

We state the following result for this type of dolutions in the $n$-body problem, which follows from direct substitutions.

\begin{Theorem} Let be $n$ point particles
with masses $m_1,m_2, \cdots, m_n>0$ moving in $\mathbb{M}^2_{R}$. An equivalent  condition for $\mathbf{z}(t)=(z_1(t), z_2(t), \cdots, z_n(t))$ 
 to be a   M\"{o}bius hyperbolic solution of (\ref{eq:motiongral}), is that the
coordinates satisfy the system of rational functional functions (depending on the time $t$),
\begin{equation} \label{eq:rationalsystem-hyp}
\frac{8 \,R^6(R^2-|z_k|^2) z_k }{(R^2+ |z_k|^2)^4} =
 \sum_{j=1, j \neq k}^n \frac{m_j \, (|z_j|^2+R^2)^2(R^2+\bar{z}_jz_k)(z_j-z_k)}{|z_j-z_k|^3 \, |R^2+ \bar{z}_j z_k|^3}
\end{equation}
and the velocity in each particle is given by the relation $ \dot{z}_k (t)= z_k (t)$,
for $k=1,2,\cdots, n$. All the solutions $z_k=z_k(t)$ are
backward asymptotic to the origin of coordinates, which implies that
there are not collisions between the particles.
\label{thm:existence-hyp}
\end{Theorem}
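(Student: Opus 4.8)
The plan is to mirror the structure of Theorem \ref{theo:relative-equilibria}: the equivalent condition for a M\"{o}bius hyperbolic solution is obtained by matching the hyperbolic Killing field (\ref{eq:hyperbolic-vector-field}), namely $\dot z = z$, against the equations of motion (\ref{eq:motiongral}). First I would record the consequence of the constraint $\dot z_k = z_k$: differentiating once gives $\ddot z_k = \dot z_k = z_k$. I would then substitute both $\dot z_k = z_k$ and $\ddot z_k = z_k$ into the left-hand side of (\ref{eq:motiongral}), computing
\begin{equation}
m_k \ddot z_k - \frac{2 m_k \bar z_k \dot z_k^2}{R^2 + |z_k|^2}
= m_k z_k - \frac{2 m_k \bar z_k z_k^2}{R^2 + |z_k|^2}
= \frac{m_k z_k (R^2 - |z_k|^2)}{R^2 + |z_k|^2},
\end{equation}
using $\bar z_k z_k^2 = |z_k|^2 z_k$. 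On the right-hand side, $\frac{2}{\lambda(z_k,\bar z_k)} = \frac{(R^2+|z_k|^2)^2}{2R^4}$ from (\ref{eq:conforesf}); multiplying by the gradient (\ref{eq:gradmet}) and clearing the common $m_k$ should, after collecting the powers of $(R^2+|z_k|^2)$, yield exactly the rational system (\ref{eq:rationalsystem-hyp}) with its prefactor $8R^6(R^2-|z_k|^2)z_k/(R^2+|z_k|^2)^4$. This computation is routine bookkeeping of conformal factors and is the analogue of the one-line derivation given for Theorem \ref{theo:relative-equilibria}; the converse direction is immediate, since any solution satisfying both the algebraic system and the velocity prescription $\dot z_k = z_k$ reconstructs a genuine trajectory of (\ref{eq:motiongral}) invariant under (\ref{eq:Hyperbolic}).

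It remains to justify the two dynamical assertions appended to the statement: that every coordinate is backward asymptotic to the origin, and that consequently no collisions occur. For the first, I would simply integrate the defining constraint. The scalar complex ODE $\dot z_k = z_k$ has the explicit solution $z_k(t) = z_{k,0}\, e^{t}$, so $|z_k(t)| = |z_{k,0}|\, e^{t} \to 0$ as $t \to -\infty$; thus each particle is backward asymptotic to $z = 0$, recovering the homothetic character already enforced by the fact that these orbits lie along meridians (rays through the origin) as noted in the discussion preceding the theorem.

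For the no-collision claim, the key observation is that all particles share the same homothety factor $e^{t}$. Writing $z_k(t) = z_{k,0}\, e^{t}$ for every $k$, the difference of two coordinates is $z_k(t) - z_j(t) = (z_{k,0} - z_{j,0})\, e^{t}$, whose modulus is $|z_{k,0}-z_{j,0}|\, e^{t}$. Since $e^{t} > 0$ for all finite $t$, this vanishes only if $z_{k,0} = z_{j,0}$, i.e. only if the particles collided already at the initial instant; for admissible initial data off the collision set $\Delta(C)$ of (\ref{eq:collisionset}) the distances stay strictly positive for all $t$, and the common limit $0$ is approached only in the limit $t \to -\infty$, never at finite time. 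I would likewise check that the antipodal (geodesic conjugated) set $\Delta(A)$ of (\ref{eq:antipodalset}) is avoided, since $z_k = -R^2 z_j/|z_j|^2$ would force $z_{k,0} = -R^2 z_{j,0}/|z_{j,0}|^2$ at $t=0$, again a condition on the excluded initial data.

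The only genuine obstacle I anticipate is the algebraic verification that the prefactor emerging from matching the fields is precisely $8R^6$ rather than some other constant; this hinges on correctly tracking the interplay between the $(R^2+|z_k|^2)$ produced by $\ddot z_k - 2\bar z_k\dot z_k^2/(R^2+|z_k|^2)$ and the conformal factor $2/\lambda$. Because the structure is identical to the elliptic cases already treated, where the analogous constant was $32R^6$ for (\ref{eq:rationalsystem-2}), I expect the hyperbolic prefactor to differ by exactly the ratio coming from $\dot z_k^2 = z_k^2$ versus $\dot z_k^2 = -4z_k^2$ in the second-vector-field case, and the clean factor of four between $32$ and $8$ is a reassuring consistency check that the bookkeeping is correct.
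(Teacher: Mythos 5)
Your proposal is correct and takes exactly the paper's route: the paper's entire proof is the observation that $\dot z_k = z_k$ forces $\ddot z_k = z_k$, and substitution into (\ref{eq:motiongral}) yields (\ref{eq:rationalsystem-hyp}); your explicit bookkeeping of the conformal factor (giving the $8R^6(R^2-|z_k|^2)$ prefactor, with the sign and the factor-of-four discrepancy from the elliptic constant $32R^6$ checking out) and your integration $z_k(t)=z_{k,0}e^{t}$ to obtain the backward-asymptotic and no-collision claims merely fill in details the paper leaves unwritten. One caveat on a side remark the theorem does not require: your assertion that reaching the antipodal set at some time would force the antipodal relation at $t=0$ is wrong, since that relation is not preserved by the homothety---under $z_l(t)=z_{l,0}e^{t}$ the condition $z_k(t)=-R^2 z_j(t)/|z_j(t)|^2$ becomes $z_{k,0}=-R^2 z_{j,0}e^{-2t}/|z_{j,0}|^2$, which can hold at one finite time without holding initially; avoidance of the conjugate set instead follows from (\ref{eq:rationalsystem-hyp}) itself, whose right-hand side blows up at such a configuration while the left-hand side remains bounded.
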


{\bf Proof.} If we derive the vector field (\ref{eq:hyperbolic-vector-field}), we obtain $\displaystyle
\ddot{z}_k = z_k$, which when is substituted into equations
of motion (\ref{eq:motiongral}) allows to the system
(\ref{eq:rationalsystem-hyp}).  This ends the proof of the Theorem.
\qed

\smallskip

For the two body problem, if we suppose that two particles with masses $m_1$ and $m_2$ and positions $z_1(t)$ and $z_2(t)$  in $\mathbb{M}^2_{R}$ are moving as a M\"{o}bius hyperbolic solution, in \cite{Reyes-Perez} is shown that, the masses of the particles are equal,  if and only if, $z_1(t)=-z_2(t)$.  
It follows that, up an isometry, for the two-body problem with equal masses there is only a  single class of M\"{o}bius hyperbolic solutions.

In \cite{Diac-Perez}, the authors show that a necessary and
sufficient condition for having a {\it Lagrangian} homothetic solution in the $3$--body problem in the unitary sphere embedded in $\mathbb{R}^3$, 
 is that the configuration be always an equilateral triangle and that the masses (always sited in the same horizontal plane intersecting the sphere)  be equal. Therefore, in order to study in beyond this kind of motion in $\mathbb{M}_R^2$, for the  M\"{o}bius hyperbolic solutions case,  it is enough to analyse the case of equal masses.

%%%%%%%%%%%%%%%%%%%%%%%%%%%%%%%%%%%%%%%%%%%%%%%%%%%%%%%%%%
\section{M\"{o}bius  nilpotent parabolic solutions}\label{parabolic}
%%%%%%%%%%%%%%%%%%%%%%%%%%%%%%%%%%%%%%%%%%%%%%%%%%%%%%%%%%

Finally,  we show the  M\"{o}bius nilpotent parabolic  solutions corresponding to the second factor $N$  in the
Iwasawa decomposition (\ref{eq:Iwasawa}),  and associated to the
one dimensional subgroup (\ref{eq:Parabolic})  of ${\rm \bf  Mob}_2 \, (\mathbb{M}_R^2)$.

\begin{Definition} A solution $\mathbf{z}(t)=(z_1(t), z_2(t), \cdots, z_n(t))$ of equation (\ref{eq:motiongral})
is called {\it  M\"{o}bius nilpotent parabolic} if it is invariant under the parabolic vector field (\ref{eq:Moebius-parabolic-field}).
\end{Definition}

We can now state the following result whose proof follows again  by
straightforward substitutions.

\begin{Theorem}\label{thm:existenceklein2}  Let be $n$ point particles
with masses $m_1,m_2, \cdots, m_n>0$ moving in $\mathbb{M}^2_{R}$. An equivalent  condition for $\mathbf{z}(t)=(z_1(t), z_2(t), \cdots, z_n(t))$ 
 to be a   to be a M\"{o}bius nilpotent parabolic solution of system
(\ref{eq:motiongral})  is that the coordinate functions, depending of the time $t$, satisfy the functional (also depending on the time $t$)
equations
\begin{equation} \label{eq:condrationalsystem-parabolic}
-\frac{16 R^6 \bar{z}_{k}}{(R^2+ |z_k|^2)^4} = \sum_{\substack{j=1\\
j\ne k}}^n \frac{m_j \, (R^2+ |z_j|^2)^2(R^2+ z_k
\bar{z}_j)(z_j-z_k)}{|z_j-z_k|^3 \, |R^2+ \bar{z}_j z_k|^3},
\end{equation}
and with  corresponding velocities $\dot{z}_k (t) = 1$, for $k=1,\dots, n$.
\end{Theorem}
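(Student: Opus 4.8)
The plan is to follow exactly the pattern that established Theorem~\ref{theo:relative-equilibria} and Theorem~\ref{thm:existence-hyp}: the asserted equivalence is governed entirely by the first-order differential equation attached to the generating one-parameter subgroup. The conceptual point is that a solution of (\ref{eq:motiongral}) invariant under the parabolic flow (\ref{eq:Parabolic}) must be an integral curve of its infinitesimal generator (\ref{eq:Moebius-parabolic-field}); hence each coordinate obeys $\dot z_k(t)=1$, and differentiating once more yields $\ddot z_k(t)=0$ for every $k$.

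First I would insert $\dot z_k=1$ and $\ddot z_k=0$ into the left-hand side of (\ref{eq:motiongral}). The inertial term $m_k\ddot z_k$ vanishes, leaving only $-2m_k\bar z_k/(R^2+|z_k|^2)$, which is the geodesic-curvature contribution produced by the unit parabolic velocity.

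Next I would expand the right-hand side using the explicit conformal factor (\ref{eq:conforesf}) and the gradient (\ref{eq:gradmet}). Writing $2/\lambda(z_k,\bar z_k)=(R^2+|z_k|^2)^2/(2R^4)$ and combining it with the factor $(R^2+|z_k|^2)/(4R^2)$ already carried inside the gradient gives an overall coefficient $(R^2+|z_k|^2)^3/(8R^6)$ in front of the sum. Cancelling the common mass $m_k$ and multiplying the whole equation by $8R^6/(R^2+|z_k|^2)^3$ turns the left-hand side into $-16R^6\bar z_k/(R^2+|z_k|^2)^4$ and the right-hand side into the cotangent-force sum of (\ref{eq:condrationalsystem-parabolic}), which is the claimed identity. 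The converse runs the same algebra backwards: if the coordinates satisfy (\ref{eq:condrationalsystem-parabolic}) and carry the prescribed velocity $\dot z_k=1$, then $(\ddot z_k,\dot z_k)=(0,1)$ solves (\ref{eq:motiongral}), and by existence and uniqueness for the associated ordinary differential equation the trajectory is precisely the orbit of the parabolic subgroup through the given data.

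The computation is purely mechanical, so the only thing demanding care is bookkeeping the powers of $(R^2+|z_k|^2)$ and the constant $R$-factors while merging $\lambda$ with the gradient; no genuine obstacle arises, which is exactly why the result is announced as following by straightforward substitutions.
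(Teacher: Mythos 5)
Your proposal is correct and is precisely the argument the paper intends: the paper states the proof "follows again by straightforward substitutions," exactly as in the elliptic and hyperbolic theorems, namely substituting $\dot z_k=1$, $\ddot z_k=0$ into (\ref{eq:motiongral}) and clearing the conformal factor. Your bookkeeping of the coefficient $(R^2+|z_k|^2)^3/(8R^6)$ and the resulting left-hand side $-16R^6\bar z_k/(R^2+|z_k|^2)^4$ matches the paper's equation (\ref{eq:condrationalsystem-parabolic}), and the converse direction is handled the same way.
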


 In \cite{Diacu2} the authors have shown that there are not M\"{o}bius nilpotent parabolic solutions for the $n$--body problem with negative curvature.
 In \cite{Reyes} it is shown the non existence of true M\"{o}bius parabolic solutions for the $n$-body problem in $\mathbb{H}^2_R$. We remark that in $\mathbb{M}^2_{R}$ these two types of conic motions coincide (although they not be isometries as in $\mathbb{H}^2_R$), and unfortunately the same happens for their existence in this positive case.

\begin{Corollary}\label{thm:no-existence-parabolic}
There are no one class of  M\"{o}bius nilpotent  parabolic  solutions for the
$n$--body problem in $\mathbb{M}^2_{R}$.
\end{Corollary}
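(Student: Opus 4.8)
The plan is to use Theorem \ref{thm:existenceklein2} to reduce the non-existence statement to a purely Euclidean fact. Suppose, for contradiction, that a M\"obius nilpotent parabolic solution exists. By that theorem the coordinates must evolve as $z_k(t)=a_k+t$ with $a_k:=z_k(0)$, and the functional identity (\ref{eq:condrationalsystem-parabolic}) has to hold for every regular time $t$. Note first that along such a motion $z_j(t)-z_k(t)=a_j-a_k$ is constant and nonzero, so no collisions occur; moreover $z_k(t)\to\infty$ while the conjugate points $-R^2 z_j/|z_j|^2\to 0$ stay bounded, so for large $t$ the orbit also avoids the antipodal set $\Delta(A)$. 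Hence (\ref{eq:condrationalsystem-parabolic}) is a genuine identity on an unbounded set of regular times, and I would pass to the limit $t\to+\infty$.

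The heart of the argument is a power count in this limit. Since $R^2+|z_k|^2=t^2\bigl(1+O(1/t)\bigr)$ and $\bar z_k=t+O(1)$, the left-hand side decays,
\[
-\frac{16R^6\bar z_k}{(R^2+|z_k|^2)^4}=O\!\left(t^{-7}\right)\longrightarrow 0 .
\]
For the right-hand side I would use $(R^2+|z_j|^2)^2=t^4\bigl(1+O(1/t)\bigr)$, $R^2+z_k\bar z_j=t^2\bigl(1+O(1/t)\bigr)$ and $|R^2+\bar z_j z_k|^3=t^6\bigl(1+O(1/t)\bigr)$, together with the constancy of $z_j-z_k$ and of $|z_j-z_k|$, so that each summand converges,
\[
\frac{m_j(|z_j|^2+R^2)^2(R^2+z_k\bar z_j)(z_j-z_k)}{|z_j-z_k|^3\,|R^2+\bar z_j z_k|^3}\longrightarrow \frac{m_j(a_j-a_k)}{|a_j-a_k|^3}.
\]
Passing to the limit in (\ref{eq:condrationalsystem-parabolic}) therefore forces, for every $k$,
\[
\sum_{j=1,\,j\neq k}^n \frac{m_j(a_j-a_k)}{|a_j-a_k|^3}=0,
\]
that is, each particle must sit in equilibrium under the \emph{planar Euclidean} Newtonian force exerted by the others.

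The final step is to rule this out by homogeneity. Consider the classical Newtonian potential $U=\sum_{1\le i<j\le n} m_i m_j/|a_i-a_j|$, viewed on the configuration space $(a_1,\dots,a_n)\in(\mathbb{R}^2)^n$; it is homogeneous of degree $-1$ and strictly positive away from collisions. The equilibrium relations above say precisely that $\nabla_{a_k}U=0$ for all $k$. Euler's identity for homogeneous functions then gives $\sum_k a_k\cdot\nabla_{a_k}U=-U=0$, contradicting $U>0$. Consequently no configuration $(a_k)$ can satisfy the limiting relations, the identity (\ref{eq:condrationalsystem-parabolic}) cannot hold for all $t$, and M\"obius nilpotent parabolic solutions do not exist, in agreement with \cite{Diacu2,Reyes}.

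I expect the main obstacle to be the rigorous bookkeeping of the asymptotics: one must confirm that the $O(1/t)$ corrections in numerator and denominator are uniform in $j$ and do not conspire to shift the leading coefficient, and that the limit may be taken termwise in the finite sum. Once the leading-order Euclidean force is correctly extracted, the Euler (homogeneity) argument closes the proof immediately; the only alternative pitfall would be to mishandle the complex versus real inner product in Euler's identity, which is avoided by treating each $a_k$ as a point of $\mathbb{R}^2$.
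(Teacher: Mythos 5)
Your proof is correct, but it takes a genuinely different route from the paper's. The paper argues at finite time: using the invariance $z\mapsto z+t$ it shifts the solution so that the particle of least real part lies on the imaginary axis, $\mathrm{Re}\,(z_k)=0$, while $\mathrm{Re}\,(z_j)\ge 0$ for all others; taking real parts of (\ref{eq:condrationalsystem-parabolic}) then shows the right-hand side is a nonnegative combination of the $\mathrm{Re}\,(z_j)$ with positive coefficients, forcing every particle onto the imaginary axis, so that $z_l(t)=t+i\beta_l$; substituting this back and taking real parts again yields
\[
-\frac{16 R^6}{(R^2+ |z_k|^2)^4} \;=\; \sum_{j\ne k}\frac{m_j \, (R^2+|z_j|^2)^2}{|z_j-z_k|^3 \, |R^2+\bar{z}_j z_k|^3}\,\left(\beta_j-\beta_k\right)^2,
\]
a strictly negative quantity equated with a nonnegative one --- an immediate sign contradiction with no limiting process. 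You instead send $t\to+\infty$: the curvature factors cancel at leading order ($t^{4}\cdot t^{2}/t^{6}$ on the right, $t^{-7}$ decay on the left --- your bookkeeping is correct, and termwise limits in a finite sum are unproblematic), so the identity degenerates into the planar Newtonian equilibrium equations $\sum_{j\ne k} m_j(a_j-a_k)/|a_j-a_k|^3=0$, which you then kill with Euler's homogeneity identity. Your route buys a conceptual interpretation: non-existence in $\mathbb{M}^2_R$ appears as the ``flattening at infinity'' of the classical fact that the Euclidean $n$-body problem admits no equilibria. The paper's route buys economy: it needs the functional identity only near one (time-shifted) instant, whereas you need it on an unbounded set of times. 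To be fair, both arguments rest on the same implicit reading of Theorem \ref{thm:existenceklein2} --- that invariance under the full parabolic subgroup makes $z_k(t)=a_k+t$ and the identity valid globally, past any antipodal-singularity times: the paper uses this when it translates the solution as far as needed to reach the imaginary axis, you use it when you let $t\to\infty$. Under that shared reading your argument is complete.
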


{\bf Proof.} Let be $n$ point particles of masses $m_1,\dots,
m_n$ moving on $\mathbb{M}^2_{R}$ with total vector position $\mathbf{z} (t)=(z_1 (t),\dots, z_n (t))$, 
with $z_k=z_k(t)$ satisfying  equations (\ref{eq:condrationalsystem-parabolic}).

Since due to the action of the vector field (\ref{eq:Moebius-parabolic-field}) the real part of the solutions become positive for suitable
large values of time, then  applying a translation to the whole set
of solutions if it is necessary, we can assume that the $k$-th
particle reaches the imaginary axis. This is,  we  assume $\mbox{Re} \,
(z_k) =0$, and $\mbox{Re} \, (z_j)  \geq 0$ for all $z_j \neq z_k$ with  $j=1,\dots, n$.

Therefore, the real parts in each side of system
(\ref{eq:condrationalsystem-parabolic}) are
\begin{eqnarray}\label{eq:real-part-zero}
0 &=& -\mbox{Re} \, \left(\frac{16 R^6 \bar{z}_{k}}{(R^2+ |z_k|^2)^4} \right) \nonumber \\
  &=&  \sum_{\substack{j=1\\ j\ne k}}^n \frac{m_j \, (R^2+ |z_j|^2)^2}{|z_j-z_k|^3 \, |R^2+ \bar{z}_j z_k|^3} \,
\mbox{Re} \,[(R^2+ z_k \bar{z}_j)(z_j-z_k)] \nonumber \\
&=&  \sum_{\substack{j=1\\ j\ne k}}^n \frac{m_j \, (R^2+
|z_j|^2)^2}{|z_j-z_k|^3 \, |R^2+ \bar{z}_j z_k|^3} \, (R^2+ |z_k|^2)
\mbox{Re} \,[z_j]. \nonumber \\
\end{eqnarray}

It follows, from the chain of equalities (\ref{eq:real-part-zero}) that the whole set of particles must be also located
on  the imaginary axis, that is, $\mbox{Re} \, (z_j(0)) = 0$ for all $ j=1,2, \cdots, n$.

Therefore, if we put $\displaystyle z_l (t)=
t+ i \beta_l$ for the position of the $l$-th particle,
then we obtain the equalities
\begin{equation}\label{eq:parabolic-parametrized}
 (R^2+ z_k \bar{z}_j)(z_j-z_k)= t \left(\beta_j - \beta_k \right)^2
+  \left( t^2+ \beta_j \beta_k +R^2 \right) \left(\beta_j
- \beta_k \right) \, i.
\end{equation}

When we substitute equations (\ref{eq:parabolic-parametrized})  in the system
(\ref{eq:condrationalsystem-parabolic}), we obtain for the real parts the relations
\begin{equation} \label{eq:real-part-parabolic}
- \frac{16 R^6}{(R^2+ |z_k|^2)^4} = \sum_{\substack{j=1\\
j\ne k}}^n \frac{m_j \, (R^2+ |z_j|^2)^2}{|z_j-z_k|^3 \, |R^2+
\bar{z}_j z_k|^3} \, \left(\beta_j - \beta_k \right)^2,
\end{equation}
for $k, j =1,\cdots, n$, and for all $t \in \mathbb{R}$.

System (\ref{eq:real-part-parabolic}) never holds, since if we avoid
collisions and geodesic conjugated points, the left hand side is always negative, whereas the
right hand side is positive. This contradiction proves the
Theorem. \qed.

%%%%%%%%%%%%%%%%%%%%%%%%%%%%%%%%%%%%%%%%%%%%%%%%%%%%%%%%%%
\section{Conclusions}\label{sec:conlusions}
%%%%%%%%%%%%%%%%%%%%%%%%%%%%%%%%%%%%%%%%%%%%%%%%%%%%%%%%%%
%%%%%%%%%%%%%%%%%%%%%%%%%%%%%%%%%%%%%%%%%%%%%%%%%%%%%%%%%%%%%%%%%%

 As in \cite{Reyes}, we have that unique M\"{o}bius solutions for the $n$-body problem in $\mathbb{M}^2_{R}$ are either the elliptic, the hyperbolic  or the composition of them (called loxodromic). Again, as we have pointed there, for the complete study of this type of solutions, it is sufficient with considering the Cartan-Haussdorf decomposition $KBK$ of $SL(2,\mathbb{C})$. Also as in that reference, in words of the classical mechanics we have the
following result.

\begin{Corollary}\label{coro:equilibria}
 The unique relative equilibria for the two dimensional positively curved $n$-body problem are those  M\"{o}bius elliptic solutions.
\end{Corollary}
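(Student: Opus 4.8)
The plan is to read \emph{relative equilibrium} in its classical dynamical sense --- a solution whose mutual configuration is preserved in time, i.e.\ one that evolves under a one-parameter group of \emph{isometries} of the configuration space --- and then to match this notion against the Möbius classification built up in the previous sections. The single fact I would lean on is that the proper isometries of $\mathbb{M}^2_R$ form exactly the quotient $SU(2)/\{\pm I\}$, as recalled in Section \ref{sec:mobius-group}. Consequently, among all one-parameter Möbius subgroups, only those lying in the elliptic factor $K=SU(2)$ of the Iwasawa decomposition (\ref{eq:Iwasawa}) act by isometries, and these are precisely the generators of the Möbius elliptic solutions.

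First I would recall that every Möbius solution of (\ref{eq:motiongral}) is, by the classification of one-parameter subgroups of ${\rm \bf Mob}_2(\widehat{\mathbb{C}})$, invariant under a subgroup of one of the types: elliptic (generated by $X_1,X_2,X_3\in su(2)$), hyperbolic (generated by $X_4$), parabolic (generated by $X_5$), or a loxodromic composition of an elliptic and a hyperbolic factor. Second I would observe that only the elliptic generators lie in $su(2)$ and hence integrate to genuine isometries; the hyperbolic flow (\ref{eq:Hyperbolic}) is the dilation $z\mapsto e^{t}z$ and the parabolic flow (\ref{eq:Parabolic}) the translation $z\mapsto z+t$, neither of which preserves the conformal metric (\ref{met-c}). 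In particular the hyperbolic orbits of Section \ref{hyperbolic} are homothetic, so their configuration changes scale and is never congruent to itself; such solutions cannot be relative equilibria.

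Putting these steps together, a Möbius solution is a relative equilibrium (classical sense) if and only if it is invariant under a one-parameter subgroup of isometries, that is, a subgroup of $SU(2)/\{\pm I\}$; by Definition \ref{def:relative-equilibria} this is exactly the defining condition of a Möbius elliptic solution, and by Lemma \ref{lema:principal} any such subgroup reduces isometrically to the one generated by (\ref{eq:second-vector-field-1}). The loxodromic case is ruled out because its hyperbolic factor already fails to be an isometry, and the parabolic case is excluded \emph{a fortiori} by Corollary \ref{thm:no-existence-parabolic}, which shows that no such solutions exist at all. The main obstacle, and the only substantive step, is verifying that the hyperbolic and parabolic subgroups genuinely lie outside the isometry group; this reduces to checking that (\ref{met-c}) is invariant exactly under the $SU(2)$-action, equivalently that the homothety $z\mapsto e^{t}z$ alters geodesic distances, which is transparent from the explicit form of $\cot_R(d_{kj}/R)$ in (\ref{eq:cot}).
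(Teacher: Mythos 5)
Your proposal is correct and follows essentially the same route as the paper, which states this corollary without a formal proof in its Conclusions, treating it as immediate from the identification of the proper isometry group of $\mathbb{M}^2_R$ with $SU(2)/\{\pm I\}$, the Iwasawa classification of M\"{o}bius solutions into elliptic, hyperbolic (homothetic), and parabolic types, and the non-existence of the parabolic ones (Corollary \ref{thm:no-existence-parabolic}). Your write-up simply makes explicit the step the paper leaves implicit: only the elliptic factor $K=SU(2)$ acts by isometries of the metric (\ref{met-c}), so the classical notion of relative equilibrium singles out exactly the solutions of Definition \ref{def:relative-equilibria}.
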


%%%%%%%%%%%%%%%%%%%%%%%%%%%%%%%%%%%%%%%%%%%%%%%%%%%%%%%%%%
\subsection*{Acknowledgements}
%%%%%%%%%%%%%%%%%%%%%%%%%%%%%%%%%%%%%%%%%%%%%%%%%%%%%%%%%%
%%%%%%%%%%%%%%%%%%%%%%%%%%%%%%%%%%%%%%%%%%%%%%%%%%%%%%%%%%%%%%%%%%

The second author Reyes-Victoria acknowledges once again to the project PLAN DE FORTALECIMIENTO DEL GRUPO DE INVESTIGACION EN ECUACIONES DIFERENCIALES 066-2013 of the UNICARTAGENA, Colombia, for the total support in the realization of this second paper with the professor Ortega-Palencia.

\end{document}